\newtheorem{theorem}{Theorem}[section]
\newtheorem{lemma}[theorem]{Lemma} 
\newtheorem*{claim*}{Claim} 
\newtheorem{proposition}[theorem]{Proposition} 
\newtheorem{question}{Question} 
\newtheorem{corollary}[theorem]{Corollary}
\theoremstyle{definition}
\newtheorem{definition}[theorem]{Definition}
\newtheorem{remark}[theorem]{Remark}
\newtheorem*{remark*}{Remark}
\newcommand{\Z}{\mathbb{Z}}
\newcommand{\R}{\mathbb{R}}
\newcommand{\C}{\mathbb{C}}
\newif\ifred
\newcommand{\correction}{\ifred \color{red} \fi}
\begin{document}

\title[Satellite fully positive braid links]{Satellite fully positive braid links are braided satellite of fully positive braid links} 

\author{Tetsuya Ito}
\address{Department of Mathematics, Kyoto University, Kyoto 606-8502, JAPAN}
\email{tetitoh@math.kyoto-u.ac.jp}
\keywords{Positive braid link, fully positive braid link, satellite link}
\subjclass{57K10,57K30,20F36}
\
%


\begin{abstract}
A link in $S^{3}$ is a \emph{fully positive braid link} if it is the closure of a positive braid that contains at least one full-twist.
We show that a fully positive braid link is a satellite link if and only if it is the satellite of a fully positive braid link $C$ such that the pattern is a positive braid that contains sufficiently many full twists, where the number of necessary full twists only depends on $C$.
As an application, we give a characterization of the unknot by the property that certain braided satellite is a (fully) positive braid knot.
\end{abstract}

\maketitle

\section{Introduction}

An element of the $n$-strand braid group $B_{n}$ is \emph{positive} if it is a product of positive standard Artin generators $\sigma_1,\ldots,\sigma_{n-1}$.
The set of positive $n$-braid forms a submonoid $B_{n}^{+}$ called the positive braid monoid.

A link $L$ is a \emph{positive braid link} if $L$ is represented by a positive braid. It is known that a positive braid link has various nice properties. A positive braid link is fibered and its polynomial invariants, like the Conway, the Jones and the HOMFLY polynomials satisfy several interesting properties \cite{vB,St,It-HOMFLY}. Among them, it deserves to mention that these polynomials of a positive braid knot know the number of the prime factors of $K$. In particular these polynomials detect the primeness (see Proposition \ref{proposition:Alexander} in Appendix). 

Furthermore, the following class of positive braid link, which we call a \emph{fully positive braid link}\footnote{In \cite{KM}, fully positive braid knot is called \emph{twist positive knot}. To avoid the confusion with positive twist knot (a twist knot that is also a positive knot), and to emphasize the \emph{braid} must contain the \emph{full} twist (like twisted torus links, often `twist' is used to mean non-full twists), we use the terminology fully positive braid link.} has more nice properties.

Let $\Delta^{2}=(\sigma_1\sigma_2\cdots \sigma_{n-1})^{n} \in B_{n}$ be the full-twist braid. For $k \geq 0$ and a positive $n$-braid $\beta \in B_{n}^{+}$ we say that \emph{$\beta$ contains at least $k$ full twists} if $\Delta^{-2k}\beta \in B_n^{+}$. Namely, $\beta = \Delta^{2k}\alpha$ for some positive braid $\alpha \in B_{n}^{+}$.
 
\begin{definition}[Fully positive braid link]
A link $L$ is a \emph{fully positive braid link} if $L$ is represented by a positive braid that contains at least one full-twist. 
\end{definition}

In \cite{FW} it is shown that if a link $L$ is represented by a positive $n$-braid that contains at least one full twist, then the braid index $b(L)$ is equal to $n$, and that the Morton-Franks-Williams inequality of the braid index is always an equality. Similarly, in \cite{KM} it is shown that for a fully positive braid \emph{knot}, the braid index appears as the third exponent in its Alexander polynomial.
Fully positive braid links contain Lorenz links, the links that appear as periodic orbits of the Lorenz equation \cite{BW,BK}.

The aim of this paper is to give a characterization of fully positive 
braid link which are satellite.

Although we will give more precise definitions in Section \ref{Section:satellite}, to state our result we informally review our terminologies of satellites. The \emph{satellite} with a \emph{companion} $C = C_1 \cup C_2 \cup \cdots \cup C_m$ and a \emph{pattern} $P=(P_1,\ldots, P_m)$ is the link $C_P$ obtained from the oriented link $C$ by replacing each tubular neighborhood $C_i$ with the link $P_i$ in a solid torus. The satellite $C_P$ is \emph{braided} if all $P_i$ are closed braids in the solid tori. A link $L$ is a \emph{satellite link} if it is written as a satellite $C_P$. We remark that a satellite link $L$ may have many expressions as a satellite $C_P$.



As we will discuss in Section \ref{Section:satellite}, it is easy to see that if the companion $C$ is a positive braid link and the pattern $P$ is a positive braid, then $C_P$ is a positive braid link, provided $P$ contains sufficiently many full twists. We will specify how many twists are needed in Proposition \ref{proposition:regular-positive}, but in general, the number of necessary full twists depends on a choice of a braid representative of $C$.

As is well-known, the $(2,3)$-cable of the $(2,3)$-torus knot is not braid positive. Thus the satellite $C_P$ with a (fully) positive braid link companion $C$ with positive braid patterns $P$ are not necessarily a positive braid link, even if we assume that the pattern $P$ contains at least one full-twist. The pattern $P$, in general, must contain many full twists. We also also remark that as the connected sum of positive braid link shows, a non-braided satellite can yield a positive braid link.

However, we show that for a fully positive braid link, the converse holds and 
we prove the following characterization of satellite fully positive braid links.

\begin{theorem}[Characterization of satellite fully positive braid link]
\label{theorem:main}
Let $L$ be the satellite with companion $C=C_1 \cup C_2\cup \cdots \cup C_{m}$ and pattern $P=(P_1,\ldots,P_m)$. Then (a) and (b) are equivalent.
\begin{itemize}
\item[(a)] $L$ is a fully positive braid link. 
\item[(b)] $C$ is a fully positive braid link and the pattern $P$ is braided. The $i$-th pattern $P_i$ is a positive braid that contains at least 
{\correction $2g(C_i)+2b(C_i)-1$} full twists. 
\end{itemize}
Here $g(C_i)$ and $b(C_i)$ are the genus and the braid index of the knot $C_i$, respectively.
\end{theorem}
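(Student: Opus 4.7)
The plan is to handle the two directions asymmetrically. The direction $(b) \Rightarrow (a)$ should reduce to Proposition~\ref{proposition:regular-positive} combined with a cabling computation: given a fully positive braid representative of $C$ on $n = \sum_{i} b(C_{i})$ strands and positive braid patterns $P_{i} \in B_{w_{i}}^{+}$ with at least $2g(C_{i}) + 2b(C_{i}) - 1$ full twists each, the standard cabling construction produces a positive braid on $N = \sum_{i} w_{i} b(C_{i})$ strands representing $L$ and containing a full twist. The quantitative bound arises from two contributions the pattern must absorb: the writhe of a closed positive braid representation of $C_{i}$ equals $2g(C_{i}) + b(C_{i}) - 1$, accounting for the discrepancy between the blackboard framing (natural for braids) and the $0$-framing (natural for solid-torus patterns); and a further $b(C_{i})$ full twists are needed to upgrade the cabled full twist $\rho(\Delta^{2}_{b(C_{i})})$ to a genuine full twist in $B_{b(C_{i}) w_{i}}$, since $\rho(\Delta^{2})$ twists cables around each other but does not twist within a cable.

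For the substantive direction $(a) \Rightarrow (b)$, I would start with a fully positive braid $\beta = \Delta^{2}\alpha \in B_{n}^{+}$ closing to $L$, together with essential tori $T_{1}, \ldots, T_{m} \subset S^{3} \setminus L$ realizing the given satellite structure. First, isotope the $T_{i}$ into braided position with respect to the braid axis of $\beta$ using Birman--Menasco's analysis of essential tori in closed braid complements; this turns $C$ into a closed braid and each $P_{i}$ into a braid in a solid-torus neighborhood of $C_{i}$. Next, transport positivity through the decomposition: since a fully positive braid link is fibered with its Bennequin surface as fiber, the fibered structure on $S^{3} \setminus L$ restricts to a fibered structure on $S^{3} \setminus C$ and on each $(V_{i}, P_{i})$, with monodromies that are products of positive Dehn twists. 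This forces $C$ to admit a positive braid representation and each $P_{i}$ to be a positive braid.

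To obtain full positivity and the quantitative bound, use that Morton--Franks--Williams gives $b(L) = n$ (since $L$ is fully positive); combined with the braid index formula for cables, this forces the induced positive braid representative of $C$ to live on exactly $\sum_{i} b(C_{i})$ strands, which is the minimum possible, and applying Franks--Williams again (this time to $C$) shows that representative contains a full twist, so $C$ is fully positive. For the patterns, run the cabling computation of the easy direction in reverse: matching the full twist count in $\beta$ against what the cabling of $C$ can produce forces each $P_{i}$ to contain at least $2g(C_{i}) + 2b(C_{i}) - 1$ full twists.

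The main obstacle is the first step of the hard direction: isotoping $T$ into braided position while preserving the positivity of $\beta$. Generic Birman--Menasco arguments allow essential tori to be braided at the price of stabilizations and destabilizations that can create negative crossings. The workaround I would pursue is to use the Bennequin fiber surface of $L$ as a rigid reference, constraining the isotopy so that each $T_{i}$ meets the fiber in a manner compatible with its positive braid structure; a minimal-complexity argument in the spirit of Gabai--Scharlemann should then produce the required positive braid decomposition of $C$ and $P$. Carefully tracking framings and intra-cable twists is what will ultimately yield the explicit bound $2g(C_{i}) + 2b(C_{i}) - 1$.
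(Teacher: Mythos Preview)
Your $(b)\Rightarrow(a)$ argument matches the paper's (Section~\ref{section:proof_b_a}). The hard direction, however, has two genuine gaps that the paper resolves by entirely different means.

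First, the step you flag as the main obstacle---isotoping the companion tori to miss the braid axis without destroying positivity of $\beta$---is not handled by Birman--Menasco plus a ``Gabai--Scharlemann style'' minimality argument; that remains a sketch. The paper's mechanism is the fractional Dehn twist coefficient: if $c(\beta)>1$ then \emph{every} essential torus can be isotoped off the axis with no modification of $\beta$ at all (Theorem~\ref{theorem:OB}), and the borderline case $c(\beta)=1$ is analysed separately (Lemma~\ref{lemma:FDTC-1}) to see that $\beta$ is already in regular form over a torus-link companion. Second, your transport of positivity via fibred structure is not valid as stated: a fibred link whose monodromy is a product of positive Dehn twists need not be a positive braid link (this only gives strong quasipositivity), and even granting fibredness of $C$ from fibredness of $C_P$, you do not get control on the \emph{form} of the monodromy needed to conclude braid positivity. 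The paper instead uses Garside theory: by the result of Bernardete--Nitecki--Guti\'errez (Theorem~\ref{theorem:SSS}) some element of the super summit set $SSS(\beta)$ preserves a round multicurve, and since passing to $SSS$ never decreases $\inf$, that element is still a positive braid containing a full twist; a further positivity-preserving conjugation (Proposition~\ref{proposition:reducible-regular-form}\,(ii)) then yields a positive regular form, from which positivity and the full-twist count for $\beta_{\rm ext}$ and each $\beta_i$ are read off directly via Proposition~\ref{proposition:regular-positive}. Note also that your final appeal to Franks--Williams runs the implication backwards: \cite{FW} shows that a positive $n$-braid \emph{with} a full twist has braid index $n$, not that a positive braid on the minimal number of strands must contain a full twist.
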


Roughly speaking, our theorem says that a fully positive braid link is a satellite if and only if it is obviously so. In particular, a satellite fully positive braid link must be a braided satellite, with a fully positive braid companion and fully positive braid patterns (i.e. the pattern is a positive braid that contains at least one full twist. 

For the knot case, the companion knot $C$ is not the unknot hence $b(C)\geq 2$ and $g(C) \geq 1$. Thus the number of full twists of a pattern $P$ needed to get a positive braid \emph{knot} $C_P$ is at least {\correction five}.
 Moreover, $b(C)=2$ and $g(C)=1$ happens if and only if $C$ is the trefoil.

Thus we get the following characterization of the unknot and trefoil. 

\begin{corollary}
\label{corollary:char-unknot}
Let $P$ be a braided pattern that contains at least one full twist, but does not contain more than 
{\correction four} full twists. Then a knot $K$ is the unknot if and only if the satellite $K_P$ is a fully positive braid knot.
\end{corollary}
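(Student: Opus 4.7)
The plan is to deduce the corollary directly from Theorem~\ref{theorem:main}, applied with $m = 1$ and companion $K$; granted that theorem the argument is then a routine numerical check. For the easy direction, if $K$ is the unknot then its tubular neighborhood is a standardly embedded solid torus in $S^{3}$, so the satellite $K_{P}$ is literally the braid closure of $P$. Since $P$ is by hypothesis a positive braid containing at least one full twist, $K_{P}$ is a fully positive braid knot by definition and no further work is needed.

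For the converse, I would suppose that $K_{P}$ is a fully positive braid knot and apply Theorem~\ref{theorem:main} to $L = K_{P}$ viewed as the satellite with companion $K$ and pattern $P$. Condition (b) of that theorem then supplies two pieces of information at once: that $K$ is itself a fully positive braid knot, and that $P$ contains at least $2g(K) + 2b(K) - 1$ full twists. Combining the latter with the standing hypothesis that $P$ contains no more than four full twists gives
\[
2g(K) + 2b(K) - 1 \le 4, \quad \text{equivalently} \quad g(K) + b(K) \le 2.
\]

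To finish, I would observe that any non-trivial knot satisfies $b(K) \ge 2$ and $g(K) \ge 1$, so $g(K) + b(K) \ge 3$, contradicting the displayed inequality; hence $K$ must be the unknot. The whole substance of the argument lies in Theorem~\ref{theorem:main}, and the corollary itself presents no real obstacle beyond keeping track of the threshold $2g(K) + 2b(K) - 1$ and performing the short numerical comparison above. The one spot worth a second thought is verifying that the unknot really is admissible as a fully positive braid knot for the forward direction (as a $1$-braid, the full twist $\Delta^{2}$ is the identity, so the trivial $1$-braid trivially contains full twists); once this bookkeeping is in hand, the argument is complete.
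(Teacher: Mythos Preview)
Your proof is correct and mirrors the paper's own reasoning, which derives the corollary directly from Theorem~\ref{theorem:main} via the observation that a non-trivial companion has $b(K)\geq 2$ and $g(K)\geq 1$, forcing $2g(K)+2b(K)-1\geq 5$. One small tightening: since Theorem~\ref{theorem:main} hypothesizes that $L$ is a \emph{satellite} (companion torus essential), the converse is cleanest phrased as a contradiction---assume $K$ is non-trivial so that $K_P$ is genuinely a satellite, then invoke the theorem to obtain the numerical contradiction---rather than applying the theorem unconditionally and reading off that $K$ is the unknot afterward.
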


\begin{corollary}
\label{corollary:char-trefoil}
Let $P$ be a braided pattern that contains at least 
{\correction five} full twists, but does not contain more than 
{\correction five} full twists. Then a non-trivial knot $K$ is the trefoil if and only if the satellite $K_P$ is a fully positive braid knot.
\end{corollary}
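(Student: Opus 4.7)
The plan is to deduce Corollary \ref{corollary:char-trefoil} as an essentially arithmetic consequence of Theorem \ref{theorem:main}. The hypothesis on $P$ says that the number of full twists of $P$ is exactly five, and the bound $2g(C)+2b(C)-1$ appearing in part (b) of Theorem \ref{theorem:main} equals $5$ precisely when $g(C)=1$ and $b(C)=2$, which are the invariants of the trefoil. So the corollary is naturally the ``boundary'' case of the main theorem.

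For the forward direction, I would first note that the trefoil is the closure of $\sigma_1^{3}=\Delta^{2}\sigma_1\in B_{2}^{+}$, so it is a fully positive braid knot with $g(K)=1$ and $b(K)=2$. The bound in (b) then reads $2(1)+2(2)-1=5$, matching the assumed lower bound on the number of full twists of $P$; the implication (b)$\Rightarrow$(a) of Theorem \ref{theorem:main} then immediately gives that $K_P$ is a fully positive braid knot.

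For the converse, suppose $K$ is a non-trivial knot with $K_P$ fully positive. Applying (a)$\Rightarrow$(b) of Theorem \ref{theorem:main} to $L=K_P$ yields both that $K$ itself is a fully positive braid knot and that $P$ must contain at least $2g(K)+2b(K)-1$ full twists. Combined with the hypothesis that $P$ contains at most five full twists, this forces $g(K)+b(K)\leq 3$. Since $K$ is non-trivial we have $b(K)\geq 2$ and $g(K)\geq 1$, so both inequalities are equalities: $g(K)=1$ and $b(K)=2$. To finish, I would invoke the standard fact that a positive braid knot of braid index two is the closure of $\sigma_1^{n}$ with $n$ odd and has Seifert genus $(n-1)/2$; the requirement $g(K)=1$ then forces $n=3$, so $K$ is the trefoil.

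The whole argument is a direct consequence of Theorem \ref{theorem:main} plus the well-known classification of knots of braid index two, so the only real obstacle lies in having established the main theorem itself; no additional machinery is needed here.
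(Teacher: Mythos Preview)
Your proposal is correct and follows the same route as the paper. The paper does not give a separate formal proof of this corollary; it is stated immediately after the observation that for a non-trivial companion one has $b(C)\geq 2$ and $g(C)\geq 1$, and that equality $b(C)=2$, $g(C)=1$ occurs precisely for the trefoil, so Theorem~\ref{theorem:main} yields the result directly --- exactly the arithmetic you carry out. Your extra line identifying the genus-one, braid-index-two positive braid knot as the closure of $\sigma_1^{3}$ simply makes explicit what the paper leaves as a standard fact.
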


For example, the corollary says that the trefoil is the unique non-trivial knot whose 
{\correction $(2,11)$-cable} is a fully positive braid knot.

It has been conjectured that a satellite Lorentz knot is a cable of a Lorenz knot. Although this has been disproved in \cite{dPP}, the question is updated as follows.

\begin{question}
\label{Question:Lorenz}
Assume that a Lorenz knot is $K$ a satellite $C_P$. 
\begin{itemize}
\item[(i)] Is the companion $C$ a Lorenz knot ? \cite[Question 5]{BK}
\item[(ii)] Let us view the pattern $P$ as a knot in $S^{3}$ via the standard embedding of the solid torus. Is the pattern $P$ a Lorenz knot ? \cite[Conjecture 1.2]{dPP}
\end{itemize}
\end{question}

Since Lorenz knots are fully positive braid knots, we have the following supporting evidence for the affirmative answer to Question \ref{Question:Lorenz}.

\begin{corollary}
\label{corollary:Lorenz}
Let $K$ be a Lorenz knot. If $K$ is a satellite knot, then $K$ is the braided satellite of a fully positive braid knot and its pattern is a positive braid that contains at least 
{\correction five} full twists.
\end{corollary}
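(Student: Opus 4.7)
The plan is to apply Theorem \ref{theorem:main} directly to a satellite decomposition of $K$. Since the paper notes that Lorenz knots are fully positive braid knots, $K$ itself satisfies hypothesis (a) of Theorem \ref{theorem:main}. Writing $K = C_P$ as a satellite with companion $C$ and pattern $P$, condition (b) of the theorem then gives that $C$ is a fully positive braid link, that $P$ is braided, and that each pattern component is a positive braid containing at least $2g(C_i) + 2b(C_i) - 1$ full twists.

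Next I would argue that $C$ is a nontrivial knot. It is a knot because $K$ is a knot: if the companion had more than one component, the satellite would have at least as many components as the companion. It is nontrivial because by the standard definition of satellite used in Section \ref{Section:satellite}, the essential torus witnessing the satellite structure must be essential in $S^{3}$, which forces the companion to be nontrivial (otherwise $K$ would just be the pattern inside a standardly embedded solid torus).

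The final step is the numerical estimate. For any nontrivial knot one has $b(C) \geq 2$ and $g(C) \geq 1$, so
\[
2g(C) + 2b(C) - 1 \;\geq\; 2 + 4 - 1 \;=\; 5,
\]
which yields the asserted lower bound of five full twists in $P$.

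I do not anticipate a genuine obstacle here: the corollary is essentially a transcription of Theorem \ref{theorem:main} in the knot case, combined with the elementary inequalities $b(C)\geq 2$ and $g(C)\geq 1$ for nontrivial knots. The only point worth verifying against Section \ref{Section:satellite} is that the convention of ``satellite knot'' adopted in the paper really does force the companion to be nontrivial, so that $b(C) \geq 2$ may be invoked without further justification.
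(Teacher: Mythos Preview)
Your proposal is correct and matches the paper's own reasoning: the paper does not give a separate proof of this corollary, but derives it in the introduction from Theorem~\ref{theorem:main} together with the observation (stated just before Corollary~\ref{corollary:char-unknot}) that in the knot case the companion is not the unknot, so $b(C)\geq 2$ and $g(C)\geq 1$, whence $2g(C)+2b(C)-1\geq 5$. One small wording point: the companion torus is essential in the link complement $S^{3}\setminus K$, not in $S^{3}$ itself, but your conclusion from this is correct.
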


In general it is hard to distinguish a fully positive braid link with a Lorenz link, although it is easy to give an example of a fully positive braid link that is not a Lorenz link. 
Thus Corollary \ref{corollary:Lorenz} illustrates the subtlety and difficulty of Question \ref{Question:Lorenz}.

Our proof of Theorem \ref{theorem:main} is based on results coming from various different aspects of braids. We give an outline of the proof and the contents of the paper.

In Section \ref{Section:satellite} we review satellites and braided satellites.
We give a \emph{regular form}, a certain nice braid representative of a braided satellite $C_P$ and we prove the easier implication, (b) $\Rightarrow$ (a) of Theorem \ref{theorem:main}.

In Section \ref{Section:reducible} we see a braid as an element of mapping class group. Then, following the Nielsen-Thurston classification, braids are classified into three types, \emph{periodic, reducible,} and \emph{pseudo-Anosov}. 
We review how a reducible braid gives rise to a regular form\footnote{The terminology ``regular form'' comes from the regular form of reducible braids introduced in \cite{GW1}.} after taking suitable conjugate. We introduce a notion of a braid representative \emph{compatible} with the satellite $C_P$, and show that a regular form of a compatible braid gives rise to braid representatives of the pattern $C$ and the companion $P$. 

In Section \ref{Section:Garside}, we show that if a link $L$ admits a \emph{positive} braid representative compatible with the satellite $C_P$, then the companion $C$ is a positive braid link and the pattern $P$ is a positive braid, by using a regular form (Theorem \ref{theorem:compatible-satellite} -- we remark that this result holds for positive braid links, not only fully positive braid links).  
Since we need to take conjugates to get a regular form, even if we start from a positive braid, a regular form may be far from a positive braid.
We use the results from Garside theory to overcome this difficulty.

Finally, in Section \ref{Section:FDTC} we show an existence of compatible braid representatives of a satellite $C_P$ for a fully positive braid link (Theorem \ref{theorem:compatible-fully-positive}). Unlike the discussions in previous sections, here it is crucial that the braid contains at least one full twist. 
The proof is based on a result from geometric method, the braid foliation/open book foliations \cite{It,IK}. 
Theorem \ref{theorem:compatible-fully-positive} and Theorem \ref{theorem:compatible-satellite} complete the proof of the implication (a) $\Rightarrow$ (b) of Theorem \ref{theorem:main}.
We also give a similar characterization for satellite positive braid links, under some additional assumptions (Theorem \ref{theorem:main-variant}).

In Appendix we prove a characterization of the unknot that is similar to Corollary \ref{corollary:char-unknot} (Theorem \ref{theorem:var-K}) that generalizes a result in \cite{Kr}, with an elementary proof. Though the argument of Appendix is independent from the rest of the article, the result suggests a satellite positive braid link is quite special.

\section*{Acknowledgement}
The author is partially supported by JSPS KAKENHI Grant Numbers 19K03490, 21H04428, 	23K03110. The author would like to thank Thiago de Paiva for stimulating discussion for Lorenz links that inspires the author to investigate the satellite structure of positive braid links. {\correction The author also would like to thank Sangyop Lee for pointing out an error for the number of necessary twists in the earlier version of the paper.}

\section{Braided satellites and its regular form}
\label{Section:satellite}

\subsection{Satellite links}
\label{Section:satellite-definitions}

In this section we review our terminologies of satellite construction for link case.
In the following, all the objects are oriented unless otherwise specified.

Let $C = C_1\cup C_2 \cup \cdots \cup C_{m}$ be an oriented $m$-component link in $S^{3}$. We denote by $N(C)= N(C_1) \cup N(C_2)\cup \cdots \cup N(C_m)$ a (closed) tubular neighborhood of $C$, where $N(C_i)$ is a (closed) tubular neighborhood of $C_i$.

The $i$-th \emph{pattern} $P_i = (V_i, l_i)$ is a pair consisting of the solid torus $V_i=S^{1}\times D^{2}$ and an oriented link $l_i$ in $V_i$, such that $l_i$ is not contained in any 3-ball in $V_i$. We say that a pattern $P_i=(V_i,l_i)$ is \emph{trivial} if $l_i$ is a single curve that is isotopic to the core of the solid torus, modulo orientation.

A \emph{(total) pattern} $P$ is an $m$-tuple of patterns $P=(P_1,\ldots,P_m)$ that is non-trivial in the sense that at least one $P_i$ is a non-trivial pattern.

\begin{definition}[Braided pattern]
We say that a pattern $P_i=(V_i,l_i)$ is \emph{braided} if $l_i$ is a closed braid in $V_i$, i.e., $l_i$ is transverse to the meridional disks $\{p\} \times D^{2}$ for all $p \in S^{1}$ and all the intersections are positive. 
We say that a total pattern $P$ is \emph{braided} if all $P_i$ are braided.
\end{definition}

Let $f_i: V_i \rightarrow N(C_i)$ be a homeomorphism that sends the longitude $S^{1} \times \{ \ast \}$ to the longitude of $C_i$. Here the longitude of $C_i$ is a simple closed curve on $\partial N(C_i)$ that is null-homologous in $S^{3} \setminus C_i$.

Let $C_P$ be a link given by 
\[ C_P = \bigsqcup_{i=1}^{m} f_i(l_i) \subset S^{3}\]

We call the torus $f(\partial V_i)$ the \emph{companion torus}.
We say that the link $C_P$ is a \emph{satellite} if the companion tori $f(\partial V_i)$ of a non-trivial patterns $V_i$ are essential.
Similarly, we say that a satellite $C_P$ is \emph{braided satellite} if the pattern is braided.

\begin{definition}[Satellite link]
A link $L$ is a \emph{satellite link} (resp. a \emph{braided satellite link}) if it is the satellite $C_P$ for some companion $C$ and pattern $P$ (resp. braided pattern $P$)
\end{definition}

For a satellite link $L$, a way to write $L$ as a satellite $C_P$ may not be unique. This is why we often distinguish a satellite $C_P$ with a satellite link $L$.

\subsection{Regular form of braided satellites}

As we mentioned in the introduction, one can always get a braid representative of a braided satellite $C_P$.

Let $C = C_1 \cup C_2 \cup \cdots \cup C_m$ be a companion and $P=(P_1,\ldots, P_m)$
be a braided pattern. We view the pattern $P_i$ as the closure of an $n_i$-braid $\beta_i \in B_{n_i}$.

Take a braid representative $\beta_{\rm ext} \in B_{r}$ of $C$. 
We denote by $w(C_i,\beta_{\rm ext})$ the writhe of the $i$-th component $C_i$, with respect to the closed braid diagram $\widehat{\beta_{\rm ext}}$.

Let $\overline{\beta_{\rm ext}}$ be the braid obtained from $\beta_{\rm ext}$ by replacing each strand $t$ of $\beta_{\rm ext}$ with $n_i$ parallel strands, if $t$ corresponds to the $i$-th component $C_i$ of $C$.
We may view $\overline{\beta_{\rm ext}}$ as a braiding of tubes. By inserting the braid $\Delta_{n_i}^{-2w(C_i,\beta_{\rm ext})}\beta_i$ inside suitable tubes (here $\Delta_{n_i}^2$ is the full twist of the $n_i$-strand braid), we get a braid $\beta$ which represents $C_P$. We will write such a braid as
\begin{equation}
\label{eqn:regular-form}
 \beta = \overline{\beta_{\rm ext}} (\Delta_{n_1}^{-2w(C_1,\beta_{\rm ext})}\beta_{1},\ldots, \Delta_{n_m}^{-2w(C_m,\beta_{\rm ext})}\beta_{m}) 
 \end{equation}
 We remark that we need to use $\Delta^{-2w(C_i,\beta_{\rm ext})}\beta_i$, not $\beta_i$ because in the satellite construction, the solid torus $V_i$ is attached so that longitudes $S^{1}\times \{*\}$ is identified with the longitude of $C_i$.

We say that the braid $\beta$ of the form \eqref{eqn:regular-form} a \emph{regular form}. That is, we say that a braid is a regular form if $\beta$ is decomposed as the braiding of tubes $\overline{\beta_{\rm ext}}$ and the braids inside the tubes, par each component $C_i$ of $C$ (See Figure \ref{fig:regular-form})

We call the braid $\beta_{\rm ext}$ and $\overline{\beta_{\rm ext}}$ the \emph{exterior braid} and the \emph{exterior tubular braids}, respectively.

\begin{figure}[htbp]
\begin{center}
\includegraphics*[width=100mm]{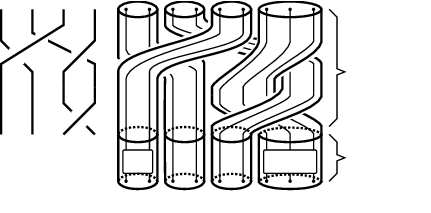}
\begin{picture}(0,0)
\put(-265,35){$\beta_{\rm ext}$}
\put(-55,90){$\overline{\beta_{\rm ext}}$}
\put(-55,40){$ (\Delta_{n_1}^{-2w(C_1,\beta_{\rm ext})}\beta_{1},$}
\put(-55,25){$ \ldots,\Delta_{n_m}^{-2w(C_m,\beta_{\rm ext})}\beta_{m})$}
\end{picture}
\caption{Regular form. The box represents the braid $\Delta_{n_i}^{-2w(C_i,\beta_{\rm ext})}$} 
\label{fig:regular-form}
\end{center}
\end{figure} 


From the construction of regular form, the following is immediate.
{\correction Let $b(C_i,\beta_{\rm ext})$ be the number of the strands of $\beta_{\rm ext}$ that represents the $i$-th component $C_i$.}

\begin{proposition}
\label{proposition:regular-positive}
A regular form \eqref{eqn:regular-form} is a positive braid that contains at least $k$ full twists if and only if $\beta_{\rm ext}$ and $\Delta_{n_i}^{-2w(C_i,\beta_{\rm ext})}\beta_{i}$ are positive braid that contain at least 
{\correction $kb(C_i,\beta_{\rm ext})$} full twist.
\end{proposition}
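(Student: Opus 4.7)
The key ingredient is the cabling identity
\[
\Delta_N^2 \;=\; \overline{\Delta_r^2} \cdot \prod_T (\Delta_{n_T}^2)_T,
\]
where the product runs over all tubes $T$ of sizes $n_T$. I would establish this by observing that both sides are pure positive braids with identical permutations and identical crossing counts, since $N(N-1) = \sum_T n_T(n_T-1) + 2\sum_{T \neq T'} n_T n_{T'}$. Geometrically, a $2\pi$ rotation of $N$ strands factors as a $2\pi$ rotation of the $r$ tubes (exactly $\overline{\Delta_r^2}$) together with a $2\pi$ rotation within each tube (contributing $\Delta_{n_T}^2$). Iterating gives $\Delta_N^{2k} = \overline{\Delta_r^{2k}} \cdot \prod_T (\Delta_{n_T}^{2k})_T$, and the factor $\prod_T (\Delta_{n_T}^{2k})_T$ is central in the image of cabling.

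For the ``if'' direction, I would write $\beta_{\rm ext} = \Delta_r^{2k} \alpha$ with $\alpha \in B_r^{+}$ and $\Delta_{n_i}^{-2 w(C_i, \beta_{\rm ext})} \beta_i = \Delta_{n_i}^{2 k b(C_i, \beta_{\rm ext})} \gamma_i$ with $\gamma_i \in B_{n_i}^{+}$. The regular form admits flexibility in placing the pattern insertion across the $b(C_i, \beta_{\rm ext})$ tubes of $C_i$: since these tubes are traversed cyclically around the solid torus in the closure and $\Delta_{n_i}^2$ is central, one may arrange the insertion as $\Delta_{n_i}^{2k}$ in each tube of $C_i$ together with $\gamma_i$ in one designated tube. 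Substituting and using the iterated identity,
\[
\beta \;=\; \overline{\beta_{\rm ext}} \cdot \prod_T (\Delta_{n_T}^{2k})_T \cdot \prod_i (\gamma_i)_{T_i}
\]
simplifies via central cancellation to $\Delta_N^{2k} \cdot \overline{\alpha} \cdot \prod_i (\gamma_i)_{T_i}$, which is $\Delta_N^{2k}$ times a positive braid. For the ``only if'' direction, positivity of $\beta$ in the regular form decomposes into positivity of the external cabling $\overline{\beta_{\rm ext}}$ (hence of $\beta_{\rm ext}$, since cabling preserves positivity at the level of individual crossings) and of each internal pattern insertion; extracting $\Delta_N^{2k}$ and using the identity in reverse then forces the claimed full-twist counts on $\beta_{\rm ext}$ and on each $\Delta_{n_i}^{-2 w(C_i, \beta_{\rm ext})} \beta_i$.

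The main obstacle is justifying the tube-wise distribution of the pattern insertion in the ``if'' direction. While geometrically natural, since traversing the solid torus of $C_i$ passes through all $b(C_i, \beta_{\rm ext})$ tubes exactly once, algebraically it requires invoking the flexibility in the definition of the regular form together with the centrality of $\Delta_{n_i}^2$ in $B_{n_i}$. Once this distribution is in place, the remainder of the equivalence reduces to algebraic bookkeeping via the cabling identity.
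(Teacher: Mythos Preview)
The paper offers no proof for this proposition beyond declaring it ``immediate from the construction of regular form,'' and your argument via the cabling identity $\Delta_N^2=\overline{\Delta_r^2}\cdot\prod_T(\Delta_{n_T}^2)_T$ together with the tube-wise redistribution of full twists is precisely the content that this phrase leaves implicit. Your treatment is therefore correct and in fact more detailed than the paper's; the one step you flag as needing care (the per-tube distribution, and in the converse direction the fact that positivity of the regular form forces positivity of each constituent) is indeed the only real subtlety, and the paper likewise takes it for granted.
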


\subsection{Braided satellite that gives a (fully) positive braid link}
\label{section:proof_b_a}

As an application of the regular form, we prove the easiest implication of our main theorems.

We emphasize that in Proposition \ref{proposition:regular-positive}, the numbers $w(C_i,\beta_{\rm ext})$ {\correction and $b(C_i,\beta_{\rm ext})$} of full-twists to get a positive braid regular form depends on a choice of $\beta_{\rm ext}$, a positive braid representative of $C$. 

To show that $w(C_i,\beta_{\rm ext})$ and {\correction and $b(C_i,\beta_{\rm ext})$}  do not depend on $\beta_{\rm ext}$ for fully positive braids, we use the following.

\begin{lemma}
\label{lemma:sub-link}
Let $L=L_1\cup L_2 \cup \cdots \cup L_m$ be a fully positive braid link represented by a positive $n$-braid $\beta$ that contains at least one full twist.
Let $\beta|_{L_i}$ be the braid diagram obtained from the braid $\beta$ by removing the strands which are not $L_i$.
Then $\beta|_{L_i}$ is a positive braid that contains at least one full twist.
In particular,
\begin{itemize}
\item each component $L_i$ of $L$ is a fully positive braid link.
\item $n=b(L)=b(L_1)+b(L_2)+\cdots + b(L_m)$
\item $\beta|_{L_i}$ is the closed $b(L_i)$-braid diagram that represents $L_i$.
\end{itemize}
\end{lemma}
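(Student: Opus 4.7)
The plan is to use a strand-deletion homomorphism together with the Franks--Williams inequality.

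First I would set up the restriction. Number the strands of $\beta$ by $\{1,\ldots,n\}$; the induced permutation $\pi_\beta \in S_n$ has orbits $S_1,\ldots,S_m$, where $S_i$ is the set of strands whose closures form the component $L_i$. Let $n_i = |S_i|$. The operation $\beta \mapsto \beta|_{L_i}$ of ``erasing the strands outside $S_i$'' can be read off directly from the Artin word: scan the word for $\beta$ letter by letter; track which positions in the running permutation correspond to $S_i$; a letter $\sigma_j$ survives (and becomes the appropriate positive Artin generator in $B_{n_i}$) exactly when the two strands at positions $j$ and $j+1$ both belong to $S_i$, and otherwise is deleted. This is well defined because $S_i$ is $\pi_\beta$-invariant, and it is a monoid map from the sub-monoid of positive braids preserving the partition $\{S_1,\ldots,S_m\}$ setwise into $B_{n_i}^+$. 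In particular, since $\beta$ is positive, $\beta|_{L_i}$ is a positive $n_i$-braid.

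Next I would show the full twist survives restriction, i.e., $(\Delta_n^2)|_{L_i} = \Delta_{n_i}^2$. Geometrically, $\Delta_n^2$ is realized by rotating a disk containing all $n$ base points through angle $2\pi$; the induced motion of any $n_i$-point subset is again a full $2\pi$ rotation, so it is $\Delta_{n_i}^2$. (Algebraically, one can see this from any positive word for $\Delta_n^2$: the restriction is central in $B_{n_i}$ and has the correct exponent sum $n_i(n_i-1)$, hence equals $\Delta_{n_i}^2$.) By hypothesis $\beta = \Delta_n^2 \alpha$ with $\alpha \in B_n^+$. Since restriction is a monoid homomorphism on positive words preserving the partition, $\beta|_{L_i} = \Delta_{n_i}^2 \cdot \alpha|_{L_i}$ with $\alpha|_{L_i} \in B_{n_i}^+$. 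Thus $\beta|_{L_i}$ is a positive $n_i$-braid containing a full twist, proving the first claim of the lemma as well as the fact that each $L_i$ is a fully positive braid link.

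For the statements about braid indices, I would invoke the theorem of Franks--Williams \cite{FW} cited earlier in the paper: a positive $k$-braid containing at least one full twist realizes the braid index. Applied to $\beta$ this gives $b(L) = n$, and applied to $\beta|_{L_i}$ this gives $b(L_i) = n_i$, so $\beta|_{L_i}$ is a closed $b(L_i)$-braid representative of $L_i$. Summing over $i$ yields $n = n_1 + \cdots + n_m = b(L_1) + \cdots + b(L_m)$.

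The only nontrivial point is the identification $(\Delta_n^2)|_{L_i} = \Delta_{n_i}^2$; everything else is bookkeeping and a direct appeal to Franks--Williams. The geometric picture of a $2\pi$ disk rotation makes this identification transparent, so I expect no serious obstacle in carrying out the plan.
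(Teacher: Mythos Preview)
Your proposal is correct and follows essentially the same route as the paper: both arguments rest on (i) strand deletion preserves positivity, (ii) strand deletion sends a full twist to a full twist, and (iii) Franks--Williams then pins down the braid indices. The paper states (ii) as ``removing one strand of $\Delta_n^2$ yields $\Delta_{n-1}^2$'' and iterates, while you remove all non-$L_i$ strands at once via the $2\pi$ disk-rotation picture; this is only a cosmetic difference. One small remark: in your parenthetical algebraic justification of $(\Delta_n^2)|_{L_i}=\Delta_{n_i}^2$, the claim that the restriction is central in $B_{n_i}$ is not automatic from centrality of $\Delta_n^2$ in $B_n$ (centrality does not pass through arbitrary maps), so you should either rely solely on the geometric argument or supply a direct reason for centrality.
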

\begin{proof}
This follows from the following two observations and aforementioned result that the the braid index of the closure of a positive $n$ braid that contains at least one full twist is $n$ \cite{FW}.
\begin{itemize}
\item Removing one strand of the full twist $\Delta^{2}_n \in B_n$ of $n$-strands yields a full twist $\Delta^{2}_{n-1} \in B_{n}$ of $n-1$ strands.
\item Removing strands of a positive braid preserves the property that it is positive.
\end{itemize}
\end{proof}

\begin{proof}[Proof of Theorem \ref{theorem:main} (b) $\Rightarrow$ (a)]
Take a positive braid that contains at least one full twist $\beta_{\rm ext}$ whose closure is $C$ and take a regular form $\beta$ that represents the braided satellite $C_P$.
By Lemma \ref{lemma:sub-link}, $C_i$ is represented by a positive $b(C_i)$-braid hence $2g(C_i)-1=-b(C_i)+w(C_i,\beta_{\rm ext})$ {\correction and $b(C_i,\beta_{\rm ext}) = b(C_i)$}. Hence {\correction $w(C_i,\beta_{\rm ext})+b(C_i,\beta_{\rm ext}) = 2g(C_i)+2b(C_i)-1$} 
so by Proposition \ref{proposition:regular-positive}, $L=C_P$ is a fully positive braid link.
\end{proof}

\section{Reducible braids and regular forms}
\label{Section:reducible}

\subsection{Regular forms of reducible braids}

In this section we discuss a close connection between the dynamics of braids and regular forms.

Let $D_n$ be the $n$-punctured disk $D_n = \{z \in \C \: | \: |z| \leq n+1\} \setminus \{x_1,\ldots,x_n\}$, where the $i$-th puncture $x_i$ is given by $x_i=i$.
A simple closed curve $\mathcal{C}$ of $D_n$ is essential if it is not isotopic to $\partial D_n$ and it encloses more than one puncture points.

\begin{definition}[Round curve]
A simple closed curve $\mathcal{C}$ of $D_n$ is \emph{round} if it is isotopic to the geometric circle whose center lies on the real axis, $\{z \in D_n \: | \: |z-r|\leq R\}$ $(r,R \in \R)$. 
\end{definition}

A \emph{multicurve} $\mathcal{C}=\{\mathcal{C}_1,\ldots, \mathcal{C}_{k}\}$ is a collection of simple closed curves $\mathcal{C}_i$, such that $\mathcal{C}$ and $\mathcal{C}_j$ are not isotopic whenever $i\neq j$. 
We say that a multicurve $\mathcal{C}$ is
\begin{itemize}
\item[--] \emph{essential} if all $\mathcal{C}_i$ are essential.
\item[--] \emph{round} if all $\mathcal{C}_i$ are round.
\end{itemize}

As is well-known, the braid group $B_n$ is identified with the mapping class group of  $D_n$. The standard Artin's generator $\sigma_i$ corresponds to the right-handed half-twist along the straight line segment connecting the $i$-th puncture point $x_i$ and $(i+1)$-st puncture point $x_{i+1}$.

Following the Nielsen-Thurston classification, as an element of mapping class group of $D_n$, an element of the braid group is classified into the three types, \emph{periodic, reducible,} and \emph{pseudo-Anosov} (see \cite{FLP} for details). 
Here we only use the reducible braids.

\begin{definition}[Reducible braids]
A braid $\beta \in B_{n}$ is reducible if $\beta$ is represented by a homeomorphism $f_{\beta}:D_{n} \rightarrow D_n$ that preserves essential multicurve $\mathcal{C}$ of $D_n$.
\end{definition}
In the following, by abuse of notation, we frequently confuse the braid $\beta$ with a homeomorphism $f_{\beta}$ that represents $\beta$ and we often say that `$\beta$ preserves the multicurve $\mathcal{C}$'.

\begin{proposition}
\label{proposition:reducible-regular-form}
If $\beta$ is a reducible braid, then after taking conjugate, $\beta$ is written as a regular form \eqref{eqn:regular-form}.

Furthermore,
\begin{itemize}
\item[(i)] If $\beta \in B_{r}$ fixes an essential round multicurve $\mathcal{C}$, then there is a braid $\alpha \in B_{r}$ that fixes the same multicurve $\mathcal{C}$ such that the conjugate $\alpha^{-1}\beta \alpha$ is a regular form.
\item[(ii)] If $\beta$ is a positive braid that fixes an essential round multicurve $\mathcal{C}$, then we may take such $\alpha$ so that the regular form $\alpha^{-1}\beta \alpha$ is a positive braid.
\end{itemize}
\end{proposition}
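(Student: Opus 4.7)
My plan is to reduce the main statement to (i) via the change-of-coordinates principle, establish (i) from the tubular structure of the stabilizer of a round multicurve, and extract (ii) by a Garside-theoretic refinement ensuring positivity is preserved.

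For the main statement, any reducible $\beta$ preserves some essential multicurve $\mathcal{C}'$. Since the braid group acts transitively on isotopy classes of essential multicurves of a fixed combinatorial type, and every such type is realized by a round multicurve, there exists $\gamma \in B_r$ with $\gamma(\mathcal{C}')$ round. The conjugate $\gamma\beta\gamma^{-1}$ then fixes the round multicurve $\gamma(\mathcal{C}')$, so part (i) applies and yields a conjugate of $\beta$ in regular form.

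For part (i), I would use the tubular exact sequence
\[
1 \to K \to \mathrm{Stab}(\mathcal{C}) \to B_m \to 1,
\]
where $\mathrm{Stab}(\mathcal{C})\le B_r$ is the stabilizer of the round multicurve $\mathcal{C}=\{\mathcal{C}_1,\ldots,\mathcal{C}_m\}$ (taking the $\mathcal{C}_i$ to be outermost; nested components are treated by recursion on depth), the quotient sends $\beta$ to the induced exterior braiding $\beta_{\rm ext}\in B_m$ of the outermost tubes, and the kernel $K$ is generated by the interior braid groups $B_{n_i}$ together with Dehn twists $\Delta_{n_i}^{\pm 2}$ along the tubes. The cabling map $B_m \to \mathrm{Stab}(\mathcal{C})$, $\beta_{\rm ext}\mapsto\overline{\beta_{\rm ext}}$, gives a set-theoretic section. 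So one can write $\beta = \overline{\beta_{\rm ext}}\cdot\delta$ with $\delta = (\delta_1,\ldots,\delta_m)\in K$ supported inside the tubes; setting $\beta_i := \Delta_{n_i}^{2w(C_i,\beta_{\rm ext})}\delta_i$ exhibits $\beta$ in the form \eqref{eqn:regular-form}. A conjugation $\alpha \in \mathrm{Stab}(\mathcal{C})$ is used only to bring the explicit braid word into the syntactic shape of \eqref{eqn:regular-form}, realized by an isotopy supported in disjoint regions (exterior of the tubes and interiors of the tubes).

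Part (ii) is the main obstacle. In general the conjugation $\alpha$ of part (i) may destroy positivity, so when $\beta$ is positive one must choose $\alpha$ more carefully. My plan is to invoke the regular form for positive reducible braids developed in \cite{GW1}: if $\beta$ is a positive braid fixing the round multicurve $\mathcal{C}$, then a \emph{positive} conjugation (conjugation by an element of $\mathrm{Stab}(\mathcal{C})\cap B_r^+$) suffices to rewrite $\beta$ into a regular form with $\overline{\beta_{\rm ext}}$ and each interior piece positive. This is achieved by successively grouping pairs of boundary-crossing generators $\sigma_i$ (those with $i,i+1$ on opposite sides of some $\mathcal{C}_j$) into single tube-crossings via the braid relations, without ever leaving the positive monoid. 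The delicate point is verifying that each required rewriting can be realized by a positive conjugation rather than an arbitrary one; this is exactly the content of the Gonz\'alez-Meneses--Wiest regular form analysis, and is what allows us to keep $\alpha^{-1}\beta\alpha$ in $B_r^+$.
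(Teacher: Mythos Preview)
Your overall strategy---reduce to the round case by an initial conjugation, then exploit the tubular structure of $\mathrm{Stab}(\mathcal{C})$ as in \cite{GW1}---matches the paper's. Two points deserve correction.

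In (i), your splitting $\beta = \overline{\beta_{\rm ext}}\cdot\delta$ with $\delta=(\delta_1,\ldots,\delta_m)\in K$ yields one interior braid per \emph{tube} (i.e.\ per curve of $\mathcal{C}$), whereas the regular form \eqref{eqn:regular-form} carries one interior braid per \emph{component of the companion} $C$, i.e.\ per orbit of tubes under $\beta_{\rm ext}$. When $\beta_{\rm ext}$ is not a pure braid these counts differ, so your identification $\beta_i := \Delta_{n_i}^{2w}\delta_i$ is ill-posed as written, and the conjugation by $\alpha$ is not merely ``syntactic'': it is exactly the step that gathers the several $\delta_j$ lying in a single $\beta_{\rm ext}$-orbit into one interior braid. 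This gathering is the content of the paper's Figure~\ref{fig:pre-regular-to-regular} and is the real substance of (i); the resulting $\alpha^{-1}\beta\alpha$ is genuinely a different braid, not a rewriting of $\beta$.

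For (ii) you significantly overcomplicate matters. Once one sees that the conjugating element $\alpha$ in the gathering step is itself built from the interior pieces of the pre-regular form, positivity is automatic: sliding a positive interior braid along its tube to the bottom and concatenating it with the others produces $\overline{\beta_{\rm ext}}$ (positive) followed by products of positive interior braids, hence a positive braid. The paper simply observes that ``from the construction, if $\beta^{*}$ is a positive braid this final conjugation preserves the property that $\beta^{*}$ is a positive braid''---no Garside refinement is invoked. (Garside theory enters only later, in Section~\ref{Section:Garside}, for the separate problem of reaching the super summit set while retaining a round reducing system.) Your proposed mechanism of ``grouping pairs of boundary-crossing generators via braid relations inside $B_r^+$'' is aimed at a different issue (turning an arbitrary positive word into a tubed diagram) and is not what drives (ii) in the paper's argument.
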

\begin{proof}
The detailed construction is discussed in \cite[Section 5.1]{GW1}. Here we give an outline of the construction, for reader's convenience.

Since $\beta$ is reducible, it fixes some essential multicurve $\mathcal{C}$.
Although $\mathcal{C}$ may not be round, but by taking conjugate $\beta^{*}$ we may assume that $\beta^{*}$ fixes an essential round multicurve. Then we can decompose $\beta^{*}$ as a braiding of tubes and braiding inside tubes, a form that is quite similar to a regular form, as we illustrate in Figure \ref{fig:pre-regular-form}. 

\begin{figure}[htbp]
\begin{center}
\includegraphics*[width=50mm]{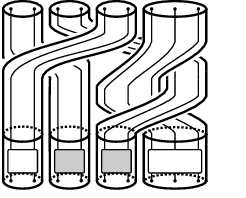}
\begin{picture}(0,0)
\end{picture}
\caption{Reducible braid preserving round essential multicurve is close to a regular form. The difference is that there may be additional braiding inside tubes (gray box)} 
\label{fig:pre-regular-form}
\end{center}
\end{figure} 

By taking further conjugate as depicted in Figure \ref{fig:pre-regular-to-regular}, we gather braidings inside tubes without changing its exterior tubular braids. From the construction, if $\beta^{*}$ is a positive braid this final conjugation preserves the property that $\beta^{*}$ is a positive braid.
\end{proof}

\begin{figure}[htbp]
\begin{center}
\includegraphics*[width=110mm]{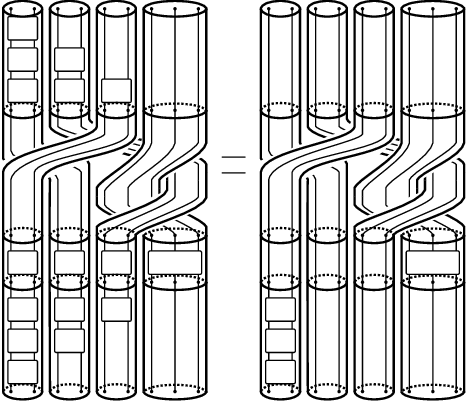}
\begin{picture}(0,0)
\put(-309,247){$\alpha_1^{-1}$}
\put(-309,226){$\alpha_3^{-1}$}
\put(-277,226){$\alpha_1^{-1}$}
\put(-309,205){$\alpha_2^{-1}$}
\put(-277,205){$\alpha_3^{-1}$}
\put(-245,205){$\alpha_1^{-1}$}
\put(-307,91){$\alpha_1$}
\put(-276,91){$\alpha_2$}
\put(-243,91){$\alpha_3$}
\put(-203,90){$\delta$}
\put(-30,90){$\delta$}
\put(-276,58){$\alpha_3$}
\put(-276,37){$\alpha_1$}
\put(-243,58){$\alpha_1$}
\put(-308,58){$\alpha_2$}
\put(-308,37){$\alpha_3$}
\put(-308,17){$\alpha_1$}
\put(-134,58){$\alpha_2$}
\put(-134,37){$\alpha_3$}
\put(-134,17){$\alpha_1$}
\end{picture}
\caption{Taking conjugate to get a regular form} 
\label{fig:pre-regular-to-regular}
\end{center}
\end{figure}

\subsection{Compatible braid representatives}

To relate the reducible braids and satellites, we introduce the following notion.

Let $\beta$ be a reducible braid that fixes a multicurve $\mathcal{C}$, and let $\widehat{\beta}$ be its closure. By taking suspension of the multicurve $\mathcal{C}$ we get a family of torus $T_{\mathcal{C}}=T_1,\ldots,T_m$ of the link complement $S^{3} \setminus \widehat{\beta}$ which we call the \emph{suspension tori} of $\mathcal{C}$.

\begin{definition}
Let $L=C_P$ be a satellite.
We say that a braid representative $\beta$ of $L$ is \emph{compatible} with the satellite $C_P$ if $\beta$ is a reducible braid that fixes a multicurve $\mathcal{C}$ such that the set of suspension tori $T_{\mathcal{C}}=\{T_1,\ldots,T_m\}$ is identical with the set of companion tori $\{f_1(\partial V_1),\ldots, f_{m}(\partial V_m)\}$.
\end{definition}

It is clear that if there is a braid representative that is compatible with the satellite $C_P$, then $C_P$ is a braided satellite. The following gives a geometric characterization of compatibility.

\begin{lemma}
\label{lemma:compatible}
Let $L=C_P$ be a satellite with companion $C$ and pattern $P$.
A braid representative $\beta$ of $L$ is compatible with the satellite $C_P$ if and only if all the companion tori $f_i (\partial V_i)$ can be put so that it is disjoint from the braid axis of $\beta$.
\end{lemma}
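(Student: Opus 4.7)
The proof naturally splits by direction, and the forward direction ($\Rightarrow$) is nearly immediate from the definitions: if $\beta$ fixes a multicurve $\mathcal{C}$ in $D_n$ whose suspension tori agree with the companion tori, then the $T_i$ are built inside the mapping torus of $f_\beta$, which is canonically identified with the open solid torus $N := S^{3}\setminus A$. Hence the $T_i$, and therefore (after the identifying isotopy in $S^{3}\setminus\widehat{\beta}$) the companion tori themselves, are disjoint from $A$.

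For the reverse direction ($\Leftarrow$), the plan is to exploit the fact that $N=S^{3}\setminus A$ fibers over $S^{1}$ by meridional disks, via a projection $p\colon N\to S^{1}$, with monodromy $f_\beta$ and with each fiber meeting $\widehat{\beta}$ transversely in $n$ points. Given that the companion tori $T=T_{1}\sqcup\cdots\sqcup T_{m}$ are placed disjoint from $A$, so that $T\subset N$, the strategy is to isotope $T$ (within $N\setminus\widehat{\beta}$) into \emph{fibered position}: that is, so that $p|_{T}$ is a submersion onto $S^{1}$ and each slice $T\cap p^{-1}(t)$ is an essential multicurve in the $n$-punctured disk $p^{-1}(t)\setminus\widehat{\beta}$. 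Once this is achieved, $p|_{T}\colon T\to S^{1}$ is a fiber bundle whose monodromy on the punctured-disk fiber is, by construction of $N$ as a mapping torus, precisely $f_\beta$. Consequently $\beta$ preserves the essential multicurve $\mathcal{C}:=T\cap p^{-1}(0)$ of $D_n$, and the suspension tori of $\mathcal{C}$ reproduce the $T_{i}$; this is exactly what compatibility with the satellite $C_P$ requires.

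The technical heart of the argument, and the main obstacle, is realizing the fibered position. I would first observe that each $T_i$ remains incompressible and boundary-incompressible in $N\setminus\widehat{\beta}$: any compressing disk in $N\setminus\widehat{\beta}$ would also be a compressing disk in $S^{3}\setminus\widehat{\beta}$, contradicting the essentiality of $T_i$ as a companion torus. After perturbing $T$ to be generically transverse to $p$, I would perform innermost-disk and outermost-arc surgeries on the meridional disks $p^{-1}(t)$, using the incompressibility of $T$, to eliminate null-homotopic and boundary-parallel components of the slices $T\cap p^{-1}(t)$. A standard Morse-theoretic argument for incompressible surfaces in a fibered $3$-manifold (of Roussarie--Thurston type, and classically applied in the context of closed braids) then eliminates the remaining saddle tangencies between $T$ and the fibers, producing the desired fibered form and completing the proof.
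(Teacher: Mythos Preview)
Your proposal is correct and follows essentially the same strategy as the paper. The paper's proof is much terser: for the reverse direction it simply isotopes the companion tori inside the solid torus $S^{1}\times D^{2}=S^{3}\setminus A$ so as to \emph{minimize} the number of components of the intersection with the single meridional disk $\{0\}\times D^{2}$, and then asserts (without further justification) that this intersection is an essential multicurve preserved by $\beta$ whose suspension recovers the $T_i$. Your version unpacks exactly what is being invoked behind that minimality claim, namely the standard incompressible-surface-in-a-fibered-manifold argument. One small terminological slip: the $T_i$ are closed tori, so the slices $T\cap p^{-1}(t)$ consist only of circles, and there is no boundary-incompressibility or outermost-arc surgery in play---innermost-disk surgery suffices.
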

\begin{proof}
Only if direction is obvious. 

If we can put all the companion tori $f_i(\partial V_i)$ so that it is disjoint from the braid axis $A$, then they are contained in solid torus $S^{1} \times D^{2} = S^{3} \setminus A$.

Inside the solid torus, we put companion tori so that the number of the connected component of the intersection with $\{0\} \times D^{2}$ is minimum. Then the intersection $\left(\bigcup_{i=1}^{m}f_i(\partial V_i)\right) \cap (\{0\} \times D^{2})$ gives rise to a multicurve of $D_{n}$ preserved by $\beta$, and the companion torus $f_i(\partial V_i)$ is the suspension of the multicurve $f_i(\partial V_i) \cap (\{0\} \times D^{2})$.
\end{proof}

For a braid representative $\beta$ of $L$ compatible with the braided satellite $C_P$, by Proposition \ref{proposition:reducible-regular-form}, there exists a regular form of a braid $\beta^{*}$
\[ \beta^{*} = \overline{\beta^{*}_{\rm ext}} (\Delta_{n_1}^{-2w(C_1,\beta^*_{\rm ext})}\beta_{1},\ldots, \Delta_{n_m}^{-2w(C_m,\beta^*_{\rm ext})}\beta_{m})\]
that is conjugate to $\beta$. Its exterior braid $\beta^*_{\rm ext}$ represents the companion $C$, and the braids $\beta_{1},\ldots,\beta_m$ gives the pattern $P$.

\section{Companion of braided satellite positive braid link}
\label{Section:Garside}

The discussion in the previous section shows for a braid representative $\beta$ compatible with the braided satellite $C_P$, by \emph{taking conjugate}, we may put $\beta$ in a regular form and we get braid representatives of the companion $C$ and the pattern $P$.

In this section, we analyze a conjugation needed to get a regular form by using Garside theory.

\subsection{Garside theory}

First we quickly review a machinery of Garside theory.
For a concise overview of Garside theory, we refer to \cite[Section 1]{BGG}. As a more comprehensive reference we refer to \cite{DDGKM}.

A Garside theory of the braid group $B_{n}$ is a machinery that gives a solution of the word and the conjugacy problems of $B_n$.

Let $\Delta=(\sigma_1\sigma_2\cdots \sigma_{n-1})\cdots (\sigma_1\sigma_{2}) \sigma_1$ be the half-twist braid. A positive braid $x$ is called a \emph{simple braid} if $x^{-1}\Delta \in B_{n}^{+}$.
For an $n$-braid $\beta \in B_{n}$, the \emph{infimum} $\inf(\beta)$ and \emph{supremum} of $\beta$ are defined by
\[ \inf(\beta) = \max\{k \in \Z \: | \: \Delta^{-k}\beta \in B^{+}_{n}\},\  \sup(\beta) = \min\{ k \in \Z \: | \: \beta^{-1}\Delta^{k} \in B^{+}_n\}\]
When $\beta$ is a positive braid, the infimum is the maximum number $k$ such that $\beta$ contains at least $k$ full twists.
  
Every braid $\beta \in B_{n}$ is uniquely represented by the \emph{Garside normal form}
\[ N(\beta) = \Delta^{\inf(\beta)} x_1 x_2 \cdots x_{r}\]
Here $x_i$ are simple braids other than $1,\Delta$.

For a braid $\beta$ with normal form $N(\beta) = \Delta^{\inf(\beta)} x_1 x_2 \cdots x_{r}$
its \emph{cycling} $\mathbf{c}(\beta)$ and \emph{decycling} $\mathbf{d}(\beta)$ are particular conjugates
\begin{align*}
\mathbf{c}(\beta) &= \Delta^{\inf(\beta)}x_2x_3 \cdots x_r(\Delta^{\inf (\beta)}x_1\Delta^{-\inf (\beta)})\\
\mathbf{d}(\beta) &= x_r  \Delta^{\inf(\beta)}x_1x_2 \cdots x_{r-1}.
\end{align*}
The infimum (resp. supermum) never decreases (resp. increases) under the cycling and decycling, namely, $\inf(\beta) \leq \inf(\mathbf{c}(\beta))$ and $\inf(\beta) \leq  \inf(\mathbf{d}(\beta))$ (resp. $\sup(\beta) \geq \sup(\mathbf{c}(\beta))$ and $\sup(\beta) \geq  \sup(\mathbf{d}(\beta))$ hold. 
Let 
\begin{align*}
{\inf}_{s}(\beta)&= \max\{ \inf(\alpha)\: | \: \alpha \in B_{n} \mbox{ is conjugate to } \beta\}\\
{\sup}_{s}(\beta)&= \min\{ \sup(\alpha)\: | \: \alpha \in B_{n} \mbox{ is conjugate to } \beta\}
\end{align*}

The \emph{super summit set} $SSS(\beta)$ of $\beta \in B_n$ is defined by
\[ SSS(\beta) = \{ \alpha \in B_{n} \: | \: \inf(\alpha) = {\inf}_{s}(\alpha), \sup(\alpha)={\sup}_s(\alpha), \alpha \in B_{n} \mbox{ is conjugate to } \beta\}\]
By definition, $\beta$ and $\beta'$ are conjugate if and only if $SSS(\beta)=SSS(\beta')$. One can algorithmically compute $SSS(\beta)$ hence we can determine whether $\beta$ and $\beta'$ are conjugate or not.

The cycling and decycling plays a fundamental role to compute the super summit set.

\begin{proposition}\cite{EM}
\label{proposition:cyc-decyc-SSS}
For any braid $\beta \in B_{n}$, after applying finitely many cycling and decycling, we can get an element of $SSS(\beta)$.
\end{proposition}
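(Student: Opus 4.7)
The plan is to combine three ingredients: (1) the monotonicity of $\inf$ and $\sup$ under cycling and decycling already recorded in the excerpt, (2) a finiteness fact for the set of conjugates of $\beta$ with bounded canonical length, and (3) the key Elrifai--Morton lemma asserting that once cycling stops increasing the infimum, the current infimum already equals ${\inf}_s(\beta)$.

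First I would observe that for any $\alpha \in B_n$ one has $\inf(\alpha)\le \sup(\alpha)$, and the set of $n$-braids with prescribed $\inf = a$ and $\sup = b$ is finite, since each such braid is encoded by its Garside normal form, a word of length $b-a$ in the finite alphabet of simple braids. Set $\beta_0=\beta$ and $\beta_{k+1}=\mathbf{c}(\beta_k)$. By the monotonicity properties of cycling recalled above the text of the proposition, $\inf(\beta_k)$ is non-decreasing and $\sup(\beta_k)$ is non-increasing; hence every $\beta_k$ lies in the finite set of conjugates of $\beta$ with infimum and supremum squeezed between $\inf(\beta)$ and $\sup(\beta)$. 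Consequently the sequence $\{\beta_k\}$ is eventually periodic, and in particular $\inf(\beta_k)$ stabilizes at some integer $a$ after finitely many steps.

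The main technical step is the following claim, whose proof I would take from Elrifai--Morton: \emph{if $\beta'$ is in a cycle of $\mathbf{c}$ and $\inf(\mathbf{c}^{j}(\beta')) = \inf(\beta')$ for all $j\ge 0$, then $\inf(\beta') = {\inf}_s(\beta)$.} To prove this, one takes any conjugate $\gamma$ of $\beta$ realising ${\inf}_s(\beta)$, writes a conjugating element $\alpha$ with $\alpha^{-1}\beta'\alpha = \gamma$, and decomposes $\alpha$ into its Garside normal form. Using the divisibility relation $\Delta^{{\inf}_s(\beta)} \preceq \alpha^{-1}\beta'\alpha$ together with the fact that the first simple factor of $\beta'$ is carried around by cycling, one argues inductively that each simple factor of $\alpha$ can be absorbed so as to witness an increase of $\inf$ under cycling; the assumed periodicity then forces ${\inf}_s(\beta)-\inf(\beta')=0$. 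This is the step I expect to be the real obstacle, since it requires the fine structure of simple braids as a Garside family (greatest common divisors in $B_n^+$ and left-divisibility by $\Delta$).

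Granted the claim, after finitely many cyclings we arrive at $\beta'$ with $\inf(\beta') = {\inf}_s(\beta)$. Now switch to decycling $\mathbf{d}$: by the monotonicity statements recalled in the excerpt, $\mathbf{d}$ never decreases $\inf$, so $\inf$ stays at ${\inf}_s(\beta)$ throughout, while $\sup$ is non-increasing and bounded below by ${\inf}_s(\beta)$. Applying the same finiteness-plus-periodicity argument to $\mathbf{d}$, together with the symmetric Elrifai--Morton lemma for $\sup$, one sees that after finitely many decyclings we also reach $\sup = {\sup}_s(\beta)$. The resulting braid lies in $SSS(\beta)$, completing the proof.
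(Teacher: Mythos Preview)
The paper does not actually prove this proposition: it is stated with the citation \cite{EM} and used as a black box, so there is no ``paper's own proof'' to compare against. Your proposal is essentially a correct outline of the original Elrifai--Morton argument, and in that sense it goes beyond what the paper does rather than diverging from it.

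As a brief comment on the content of your sketch: the overall architecture (cycle to maximize $\inf$, then decycle to minimize $\sup$, using finiteness of the set of conjugates with bounded canonical length to get eventual periodicity) is exactly the standard one. The one place to be careful is the ``main technical step'': the inductive absorption argument you describe is the right idea, but the way it is usually phrased is that if $\inf(\beta')<{\inf}_s(\beta)$ then some iterated cycling strictly increases $\inf$, proved by bounding the number of required cyclings in terms of the word length of a conjugator $\alpha$ in simple elements. Your periodicity hypothesis is a slightly different packaging of the same contradiction and works, but when you write it up you should make explicit the greatest-common-divisor/left-complement machinery for simple elements that drives the induction, since that is where the actual Garside input lies.
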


\subsection{Reducibility and Garside theory}

In \cite{BNG}, it is shown that  one can determine whether a given braid is reducible or not by Garside theory.

A key observation of \cite{BNG} is that the cycling and the decycling preserves the property that the braid preserves round multicurves.

\begin{proposition}\cite{BNG}
\label{proposition:BNG}
Assume that $\beta \in B_{n}$ preserves an essential round multicurve. Then so do its cycling $\mathbf{c}(\beta)$ and decycling $\mathbf{d}(\beta)$.
\end{proposition}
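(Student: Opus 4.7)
The strategy is to realize cycling and decycling as conjugations by specific Garside-simple factors of $\beta$, and then to invoke the combinatorial lemma of \cite{BNG} describing how such simple elements must act on round essential multicurves.

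First I would reduce to the positive case. Writing the Garside normal form $N(\beta) = \Delta^p x_1 \cdots x_r$ and setting $\alpha := x_1 \cdots x_r \in B_n^+$, the fact that $\Delta$ acts as a rigid motion of $D_n$ implies that $\Delta^{\pm p}$ sends round essential multicurves to round essential multicurves. Hence the hypothesis that $\mathcal{C}$ and $\beta(\mathcal{C}) = \mathcal{C}$ are round is equivalent to the statement that $\mathcal{C}$ and $\alpha(\mathcal{C}) = \Delta^{-p}(\mathcal{C}) =: \widetilde{\mathcal{C}}$ are both round.

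A short manipulation of the definitions in the excerpt yields
\[
\mathbf{c}(\beta) \;=\; \tau^p(x_1)^{-1}\,\beta\,\tau^p(x_1), \qquad \mathbf{d}(\beta) \;=\; x_r\,\beta\,x_r^{-1},
\]
where $\tau^p(x_1) := \Delta^p x_1 \Delta^{-p}$ is again a simple element. Consequently $\mathbf{c}(\beta)$ preserves $\tau^p(x_1)^{-1}(\mathcal{C})$, which — using $x_1^{-1}(\widetilde{\mathcal{C}}) = (x_2 \cdots x_r)(\mathcal{C})$ obtained from $\alpha(\mathcal{C}) = \widetilde{\mathcal{C}}$ — equals $\Delta^p\,(x_2 \cdots x_r)(\mathcal{C})$, while $\mathbf{d}(\beta)$ preserves $x_r(\mathcal{C})$. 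Because $\Delta$ preserves roundness, the proposition reduces to showing that the essential multicurves $(x_2 \cdots x_r)(\mathcal{C})$ and $x_r(\mathcal{C})$ are round.

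For this last step I would appeal to the combinatorial heart of \cite{BNG}: whenever $\alpha \in B_n^+$ has Garside normal form $x_1 \cdots x_r$ and both $\mathcal{C}$ and $\alpha(\mathcal{C})$ are round essential multicurves, every partial product of this normal form — both the initial ones $x_1 \cdots x_i(\mathcal{C})$ and the terminal ones $x_j \cdots x_r(\mathcal{C})$ — is again a round essential multicurve. Specializing to $j = r$ handles the decycling case and specializing to $j = 2$ handles the cycling case. The main obstacle is the BNG lemma itself, a delicate geometric analysis of how a permutation braid rearranges the meridional arcs cut off by a round essential multicurve; I would cite it as a black box rather than attempt to reprove it.
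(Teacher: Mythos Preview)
The paper does not supply its own proof of this proposition; it simply records the statement with a citation to \cite{BNG} and moves on, so there is no in-paper argument to compare your proposal against. That said, your sketch is precisely the standard \cite{BNG} argument: you correctly identify cycling and decycling as conjugation by the simple elements $\tau^{p}(x_1)$ and $x_r$, reduce the question to the roundness of the intermediate images $(x_2\cdots x_r)(\mathcal{C})$ and $x_r(\mathcal{C})$, and then invoke the central geometric lemma of \cite{BNG} that along a left normal form the intermediate images of a round essential multicurve remain round. One small remark: your argument only uses the ``terminal'' partial products $x_j\cdots x_r(\mathcal{C})$ (for $j=2$ and $j=r$); the ``initial'' ones $x_1\cdots x_i(\mathcal{C})$ never enter, so that clause can be dropped without affecting the proof.
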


Proposition \ref{proposition:BNG} and Proposition \ref{proposition:cyc-decyc-SSS} leads to the following.

\begin{theorem}\cite{BNG}
\label{theorem:SSS}
If $\beta$ is reducible, there is an element $\beta^{SSS} \in SSS(\beta)$ that preserves essential round curves.
\end{theorem}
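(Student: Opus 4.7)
The plan is to combine Proposition \ref{proposition:cyc-decyc-SSS} (cycling and decycling eventually reach $SSS(\beta)$) with Proposition \ref{proposition:BNG} (cycling and decycling preserve the property of fixing an essential round multicurve). Since the super summit set depends only on the conjugacy class, $SSS(\beta) = SSS(\beta')$ for any conjugate $\beta'$ of $\beta$, so we are free to replace $\beta$ by any representative of its conjugacy class before starting.

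First I would establish the following preliminary step: any reducible braid admits a conjugate that preserves an essential \emph{round} multicurve. By definition of reducibility, $\beta$ preserves some essential multicurve $\mathcal{C}$; its combinatorial type in $D_n$ is a nested partition of the puncture set. Since the braid group realizes the full mapping class group of $D_n$ rel $\partial D_n$, we can apply a permutation braid $\gamma$ that rearranges the punctures so that $\gamma^{-1}(\mathcal{C})$ becomes a round multicurve. The conjugate $\gamma^{-1}\beta\gamma$ then preserves the essential round multicurve $\gamma^{-1}(\mathcal{C})$.

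Replacing $\beta$ by this conjugate, I would then invoke Proposition \ref{proposition:cyc-decyc-SSS} to produce a finite sequence of cyclings and decyclings taking $\beta$ to some $\beta^{SSS} \in SSS(\beta)$. By Proposition \ref{proposition:BNG}, each step of this sequence preserves the property of fixing an essential round multicurve (although the particular round multicurve preserved may change from step to step). A straightforward induction on the length of the sequence then shows that $\beta^{SSS}$ itself preserves an essential round multicurve, proving the theorem.

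The substantive input is really Proposition \ref{proposition:BNG}, which is already granted. The only step requiring independent argument is the initial conjugation making the preserved multicurve round; this is the combinatorial heart of the matter but is routine once one recalls that any essential multicurve of $D_n$ is equivalent to a round one after permuting the punctures. The rest is a direct iteration of the two cited propositions.
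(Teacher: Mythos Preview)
Your proposal is correct and follows exactly the approach the paper indicates: the paper simply says the theorem follows from Propositions~\ref{proposition:BNG} and~\ref{proposition:cyc-decyc-SSS}, and your argument spells out precisely this combination. One minor terminological slip: the conjugating element $\gamma$ sending $\mathcal{C}$ to a round multicurve need not be a \emph{permutation braid} (simple element) in general---an arbitrary braid may be required---but the existence of such a $\gamma$ is guaranteed by the change-of-coordinates principle you invoke, so the argument stands.
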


Since one can compute $SSS(\beta)$, and one can check whether a given braid $\beta^{SSS} \in SSS(\beta)$ preserves a round multicurve or not, the theorem gives an algorithm to determine the reducibility of braids.

\begin{remark}
The super summit set method for the conjugacy problem was improved for \emph{ultra summit set} \cite{Ge} and \emph{sliding circuits} \cite{GG}. They have more advantages over the supper summit set. Furthermore, Theorem  \ref{theorem:SSS} is generalized and strengthened for ultra summit set \cite{LL} or sliding circuits \cite{GW2}. 
In \cite{LL} it is shown that under some mild assumptions, if a braid $\beta$ is reducible, then \emph{all} the elements of its ultra summit set preserves essential round curves. Similarly, in \cite{GW2} it is shown that if a braid $\beta$ is reducible, then, without any additional assumptions, \emph{all} the elements of its sliding circuits preserve essential round curves, or, `almost' essential round curves.
\end{remark}

\subsection{Positivity criterion of companion and patterns}

Now we are ready to prove the following theorem, which relates the positivity of braid representative compatible with the satellite $C_P$ and the positivities of companion and patterns.

\begin{theorem}
\label{theorem:compatible-satellite}
Let $\beta$ be a {\correction positive} braid representative of a link $L$ compatible with the satellite $C_P$. Assume that $\beta$ contains at least $k$ full twists $(k \geq 0)$. Then the companion $C$ is represented by a positive braid that contains at least $k$ full twists and the $i$-th pattern $P_i$ is a positive braid.

Furthermore, if $k>0$ then $P_i$ contains at least 
{\correction $2g(C_i)+(k+1)b(C_i)-1$} full twists.
\end{theorem}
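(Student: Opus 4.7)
The plan is to conjugate $\beta$ to a positive braid in regular form compatible with the satellite $C_P$, and then read off the conclusion from Proposition \ref{proposition:regular-positive} together with Bennequin's genus formula. I would proceed via Garside theory: first, since $\beta$ is compatible with $C_P$, it is reducible with reducing multicurve $\mathcal{C}$ tracking the companion tori. Applying cycling and decycling (Proposition \ref{proposition:cyc-decyc-SSS}) and invoking Theorem \ref{theorem:SSS} together with Proposition \ref{proposition:BNG}, one produces a conjugate $\beta^{SSS}\in SSS(\beta)$ which preserves an essential round multicurve encoding the same companion tori. Since cycling and decycling never decrease the infimum, $\beta^{SSS}$ is still positive with $\inf(\beta^{SSS})\geq 2k$, i.e., it still contains at least $k$ full twists.

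Next, I would apply Proposition \ref{proposition:reducible-regular-form}(ii) to $\beta^{SSS}$ to obtain a positive regular form $\beta^{*}=\alpha^{-1}\beta^{SSS}\alpha$, where $\alpha$ fixes the round multicurve; this conjugation preserves the satellite structure $C_P$. The ``gathering'' construction sketched around Figure \ref{fig:pre-regular-to-regular} is by conjugations supported inside the tubes and preserves the central factor $\Delta_n^{2k}$, so $\beta^{*}$ itself still contains at least $k$ full twists. Writing
\[
\beta^{*}=\overline{\beta^{*}_{\rm ext}}\bigl(\Delta_{n_1}^{-2w(C_1,\beta^{*}_{\rm ext})}\beta_{1},\ldots,\Delta_{n_m}^{-2w(C_m,\beta^{*}_{\rm ext})}\beta_{m}\bigr),
\]
the ``only if'' direction of Proposition \ref{proposition:regular-positive} yields that $\beta^{*}_{\rm ext}$ is a positive braid containing at least $k$ full twists whose closure is $C$, and that each $\Delta_{n_i}^{-2w(C_i,\beta^{*}_{\rm ext})}\beta_{i}$ is a positive braid with at least $k\cdot b(C_i,\beta^{*}_{\rm ext})$ full twists, from which each pattern $\beta_i$ is in particular a positive braid.

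For the refined twist count when $k\geq 1$, I would invoke Bennequin's genus formula for positive braid knots. Since $\beta^{*}_{\rm ext}$ is then fully positive, Lemma \ref{lemma:sub-link} applied to the closure $C$ gives $b(C_i,\beta^{*}_{\rm ext})=b(C_i)$ and exhibits the sub-braid $\beta^{*}_{\rm ext}|_{C_i}$ as a positive $b(C_i)$-braid representative of the knot $C_i$ with writhe $w(C_i,\beta^{*}_{\rm ext})$. Bennequin's formula then reads $w(C_i,\beta^{*}_{\rm ext})=2g(C_i)+b(C_i)-1$, and substituting into the twist count above for the inserted braid yields that $\beta_i$ contains at least $k\, b(C_i)+w(C_i,\beta^{*}_{\rm ext})=2g(C_i)+(k+1)b(C_i)-1$ full twists, as claimed. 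The main obstacle in my view is verifying in the first step that the Garside-theoretic reduction to $SSS$ preserves not merely reducibility but the specific reducing multicurve $\mathcal{C}$ (up to isotopy to a round position), and relatedly that the conjugation in Proposition \ref{proposition:reducible-regular-form}(ii) does not strip away the central full twist factor $\Delta_n^{2k}$; both points require careful tracking of the round multicurve through conjugations whose effect on the braid word is otherwise combinatorially intricate, and seem to be where the Garside machinery of \cite{BNG} and the tube-gathering picture of \cite{GW1} interact most delicately.
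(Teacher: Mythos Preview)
Your proposal is correct and follows essentially the same route as the paper: pass to the super summit set via Theorem~\ref{theorem:SSS}, then to a positive regular form via Proposition~\ref{proposition:reducible-regular-form}(ii), and read off the conclusion from Proposition~\ref{proposition:regular-positive} together with Lemma~\ref{lemma:sub-link} and the Bennequin genus formula.

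Since you flag the two delicate points explicitly, here is how each is resolved (the paper is equally terse on both). For the multicurve tracking: one first conjugates $\beta$ by some $\delta$ so that the reducing multicurve $\mathcal{C}$ becomes round; this conjugation may lower $\inf$, so your sentence ``cycling and decycling never decrease the infimum'' is not quite the right justification. The correct reason is that $\inf_s$ is a conjugacy invariant, hence $\inf(\beta^{SSS})=\inf_s(\beta)\geq\inf(\beta)\geq 2k$ regardless of what the initial conjugation by $\delta$ did. The BNG argument (Proposition~\ref{proposition:BNG}) does track the multicurve: each cycling/decycling is conjugation by a simple element, and the lemma behind it says this sends the given round multicurve to another round multicurve, so the suspension tori of the resulting round multicurve for $\beta^{SSS}$ are still the companion tori of $C_P$. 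For the full-twist count in $\beta^{*}$: since $\Delta^{2}$ is central, $\gamma:=\Delta^{-2k}\beta^{SSS}$ is positive and fixes the same round multicurve, so one may apply Proposition~\ref{proposition:reducible-regular-form}(ii) to $\gamma$ to get a positive regular form $\alpha^{-1}\gamma\alpha$; then $\beta^{*}=\alpha^{-1}\beta^{SSS}\alpha=\Delta^{2k}(\alpha^{-1}\gamma\alpha)$ visibly contains at least $k$ full twists, and one checks directly that multiplying a positive regular form by $\Delta^{2k}$ again yields a positive regular form with the same round multicurve.
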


\begin{proof}
Since $\beta$ is reducible, by Theorem \ref{theorem:SSS}, there is an element $\beta^{SSS} \in SSS(\beta)$ that preserves round curves.
Since $0\leq k \leq \inf(\beta) \leq \inf_s(\beta) = \inf(\beta^{SSS})$, $\beta^{SSS}$ is a positive braid that contains at least $k$ full twists.
By Proposition \ref{proposition:reducible-regular-form}, by taking further conjugate, there is a positive braid $\beta^{*}$ that is a regular form
\[ \beta^{*} = \overline{\beta^*_{\rm ext}} (\Delta_{n_1}^{-2w(C_1,\beta^*_{\rm ext})}\beta_{1},\ldots, \Delta_{n_m}^{-2w(C_m,\beta^*_{\rm ext})}\beta_{m}) \]
such that $\beta^{*}$ also contains at least $k$ full twists. By Proposition \ref{proposition:regular-positive}, $\beta^*_{\rm ext}$ is a positive braid that contains at least $k$ full twists and that $P_i$ is a positive braid that contains at least 
{\correction $w(C_i,\beta^*_{\rm ext})+kb(C_i,\beta^*_{\rm ext})$} full twists. 

If $k>0$, then by Lemma \ref{lemma:sub-link} the diagram $\beta^*_{\rm ext}|_{C_i}$ is a positive closed $b(C_i)$-braid diagram. Thus $w(C_i,\beta^*_{\rm ext})+1=2g(C_i)+b(C_i)-1$ so $P_i$ contains at least 
{\correction $(2g(C_i)+b(C_i)-1)+kb(C_i)$} 
full twists.

\end{proof}

\section{Existence criterion of compatible braid representatives}
\label{Section:FDTC}

Thanks to Theorem \ref{theorem:compatible-satellite}, to complete the proof the implication (a) $\Rightarrow$ (b) of Theorem \ref{theorem:main},
it remains to show the existence of a (fully positive) braid representative $\beta$ compatible with a satellite $C_P$, for \emph{all} satellite decomposition of $L$.

The \emph{Fractional Dehn twist coefficient} (FDTC, in short) of the braid group $B_{n}$ is a map $c:B_n \rightarrow \R$. Here we do not give precise definitions. For details, we refer to \cite{Ma,HKM,IK}. We will use the following properties of FDTC. 

\begin{proposition}
\label{proposition:FDTC-basics}
The FDTC $c:B_{n}\rightarrow \R$ has the following properties
\begin{itemize}
\item[(i)] $c(\Delta^{2N}\alpha)=N + c(\alpha)$ for all $n \in \Z$ and $\alpha \in B_{n}$.
\item[(ii)] If $\beta$ and $\beta'$ are conjugate, then $c(\beta)=c(\beta')$.
\item[(iii)] $c(\beta^{k})=k c(\beta)$.
\item[(iv)] $c(\alpha \beta)\leq c(\alpha \sigma_i \beta)$ for all $i =1,2,\ldots,n-1$.
\item[(v)] Assume that a braid $\beta = \overline{\beta_{\rm ext}} (\beta_1,\ldots,\beta_m)$ is a regular form. Then $c(\beta) = c(\beta_{\rm ext})$.
\end{itemize}
(Note that properties (i) and (iii) implies that $c(\Delta^{2N})=N$.)
\end{proposition}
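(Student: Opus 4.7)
The plan is to adopt the translation-number definition of the FDTC. View $\beta \in B_n$ via the capping homomorphism $B_n \to \mathrm{MCG}(D_n,\partial D_n)$, represent it by a homeomorphism $\phi_\beta$ fixing $\partial D_n$ pointwise, and choose a lift $\widetilde\phi_\beta : \R \to \R$ of its action on the boundary circle. Set $c(\beta) := \lim_{k\to\infty} \widetilde\phi_\beta^k(x_0)/k$, the classical Poincar\'e translation number; all five assertions reduce to translation-number calculations together with one geometric input.

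Properties (ii) and (iii) are immediate from the well-known conjugation invariance and homogeneity of the translation number for homeomorphisms of $\R$ that commute with integer translations. For (i), the element $\Delta^2$ is exactly the Dehn twist along a curve parallel to $\partial D_n$, so its canonical lift is $x \mapsto x+1$; composing a lift of $\alpha$ with $N$ such integer shifts changes the translation number by exactly $N$, giving $c(\Delta^{2N}\alpha) = N + c(\alpha)$ (and hence $c(\Delta^{2N})=N$ upon combining with (iii)).

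The main obstacle is (iv). Here I would follow Malyutin's argument in \cite{Ma}: arrange representatives so that $\sigma_i$ is supported in a small disk around the segment joining the $i$-th and $(i+1)$-st punctures, so that $\sigma_i$ acts as the identity on a collar of $\partial D_n$ and its canonical lift to that collar is pointwise $\geq$ the identity (because the half-twist is right-handed). Since the translation number is monotone with respect to the pointwise partial order on lifts, this forces $c(\alpha\beta) \leq c(\alpha\sigma_i\beta)$. The delicate point, and the one I expect to require the most care, is the compatibility of the various canonical lifts after composition; this is exactly what is worked out in \cite{Ma,HKM}.

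Finally, for (v), in a regular form $\beta = \overline{\beta_{\rm ext}}(\beta_1,\ldots,\beta_m)$ the interior braids $\beta_i$ are supported inside disjoint tubes in the interior of $D_n$, away from $\partial D_n$, so they induce the identity on a collar of the boundary and contribute nothing to the boundary translation number; hence $c(\beta) = c(\overline{\beta_{\rm ext}})$. Since $\overline{\beta_{\rm ext}}$ is the cabling of $\beta_{\rm ext}$ by parallel strands, and parallel cabling induces the same homeomorphism on the outermost boundary circle up to reparametrization, we conclude $c(\overline{\beta_{\rm ext}}) = c(\beta_{\rm ext})$.
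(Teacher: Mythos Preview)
The paper does not prove this proposition at all: immediately after stating it the text moves on to Theorem~\ref{theorem:OB}, treating (i)--(v) as known facts with a blanket pointer to \cite{Ma,HKM,IK} for the definition and details. So there is no argument in the paper to compare your sketch against; you are supplying strictly more than the paper does.

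Your overall plan---adopt the translation-number definition and read off (i)--(v)---is the standard route and is essentially correct, but your setup contains a real slip. You write ``choose a lift $\widetilde\phi_\beta:\R\to\R$ of its action on the boundary circle,'' having just stipulated that $\phi_\beta$ fixes $\partial D_n$ pointwise. That action is then the identity, every lift is an integer translation, and your formula would force $c(\beta)\in\Z$ for all $\beta$, which is false. The circle on which $B_n$ genuinely acts with nontrivial translation numbers is not $\partial D_n$ itself but, for instance, the circle at infinity of the universal cover of $D_n$ (equivalently, the real line underlying Malyutin's or Dehornoy's construction), on which $B_n/\langle\Delta^2\rangle$ acts by homeomorphisms and $B_n$ acts by lifts commuting with $x\mapsto x+1$. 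Once the correct circle is named, your arguments for (i)--(iv) go through as you wrote them.

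For (v) your idea is right, but note that $\beta_{\rm ext}\in B_r$ and $\overline{\beta_{\rm ext}}\in B_n$ live in different braid groups, so the equality $c(\overline{\beta_{\rm ext}})=c(\beta_{\rm ext})$ is comparing two different FDTC maps. The clean justification is that the cabling homomorphism $B_r\to B_n$ intertwines the two circle actions (in particular it sends $\Delta_r^{2}$ to a braid with FDTC equal to $1$ in $B_n$); saying this explicitly would close the argument. This property is not literally contained in \cite{Ma,HKM,IK}, so your sketch here is the substantive contribution.
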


The following result from the braid foliation/open book foliation theory says that the FDTC gives an obstruction for essential torus tori to have a non-trivial intersection with the braid axis. We refer to \cite{LM} for basics of braid foliation/open book foliation theory.

\begin{theorem}\cite[Theorem 1.2]{It},\cite[Proposition 7.10]{IK}
\label{theorem:OB}
Let $L$ be the closure of a braid $\beta$ and let $T$ be essential tori of the complement of $L$. If $c(\beta) > 1$, then we can put $T$ so that it is disjoint from the braid axis.
\end{theorem}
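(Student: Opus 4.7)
The plan is to invoke the theory of open book foliations developed by Ito--Kawamuro. The braid axis $A$ together with the disk fibration of its complement gives $S^{3}$ the structure of an open book whose pages are disks $D_t$ ($t \in S^{1}$) and whose monodromy on the punctured disk $D_n = D_t \setminus L$ is the braid $\beta$. The FDTC $c(\beta)$ is, by definition, the translation number of the boundary homeomorphism induced by $\beta$ on $\partial D_n$, so the hypothesis $c(\beta) > 1$ says that on average $\beta$ rotates the boundary of each page by strictly more than one full turn.

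First, I would isotope $T$ into general position with respect to this open book. This yields a singular foliation $\mathcal{F}_{ob}$ on $T$ whose leaves are the components of $T \cap D_t$, whose elliptic singularities lie on $T \cap A$, and whose hyperbolic singularities lie on the finitely many tangent pages. Since $\chi(T) = 0$, the signed counts of elliptic and hyperbolic singularities obey a Poincar\'e--Hopf-type identity that strongly restricts which foliations can occur.

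Next, I would simplify $\mathcal{F}_{ob}$ using the standard moves of braid/open book foliation theory---bypass moves, $b$-arc destabilizations, exchange moves, and elimination of trivially nested circle leaves---until no further simplification is available. If $T \cap A = \emptyset$ after this process we are done, so assume for contradiction that $T$ still meets $A$.

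The final step is to extract from the simplified $\mathcal{F}_{ob}$ a core curve $\gamma \subset T$---either a closed leaf, or a chain of $b$-arcs that close up on $T$---whose transport by the monodromy records how $T$ wraps around $A$. Essentiality and incompressibility of $T$ force the rotation of $\gamma$ under $\beta$, measured as a boundary translation, to be bounded by at most a single full twist of $\partial D_n$, contradicting $c(\beta) > 1$. The main obstacle is exactly this final quantitative step: carefully accounting for the boundary-twist contribution of each foliation region on $T$ and comparing it with the translation number $c(\beta)$. This is the delicate combinatorial analysis carried out in \cite{It,IK}, and it is where braid foliation theory rather than pure Garside theory becomes indispensable.
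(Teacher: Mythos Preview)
The paper does not actually prove Theorem~\ref{theorem:OB}; it is quoted without argument as a known result from \cite{It} and \cite{IK}, and used as a black box in the remainder of Section~\ref{Section:FDTC}. So there is no ``paper's own proof'' to compare against.

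That said, your outline is a faithful sketch of the strategy employed in those cited references: put the essential torus in general position with respect to the disk open book, analyze the resulting open book foliation on $T$, simplify via the standard moves, and extract a quantitative bound on the fractional Dehn twist coefficient from the combinatorics of the foliation when $T$ still meets the axis. You are also right that the delicate part is the final rotation-counting step, and you are appropriately modest about not reproducing it here. If the intent was merely to indicate why the result holds, your summary is adequate; if an actual proof is required, you would need to carry out the combinatorial bookkeeping in detail, which is exactly what \cite{It,IK} do.
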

%

This leads to the following.

\begin{theorem}[Existence of compatible braid representative]
\label{theorem:compatible-FDTC}
Let $\beta \in B_{n}$ be an $n$-braid and let $L=\widehat{\beta}$ be its closure.
Assume that $c(\beta)>1$. If $L$ is a satellite $C_P$, then the pattern $P$ is braided. Furtheremore, $\beta$ is a reducible braid that is compatible with the braided satellite $C_P$.
\end{theorem}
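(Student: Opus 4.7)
The plan is direct: the hypothesis $c(\beta) > 1$ is exactly what Theorem \ref{theorem:OB} requires, so one applies it to the essential companion tori and then invokes Lemma \ref{lemma:compatible} to extract compatibility and braidedness.

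First I would take the collection of companion tori $T_i = f_i(\partial V_i)$ corresponding to the non-trivial patterns; these are pairwise disjoint essential tori in $S^3 \setminus L$. Theorem \ref{theorem:OB} (whose statement is already phrased for a family of essential tori) together with $c(\beta) > 1$ yields an ambient isotopy putting the entire collection $\{T_i\}$ into the complement of the braid axis $A$ of $\beta$ simultaneously. The companion tori arising from trivial patterns can be taken disjoint from $A$ from the outset, and the corresponding patterns are trivially braided.

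Second, with all companion tori disjoint from $A$, Lemma \ref{lemma:compatible} immediately gives that $\beta$ is compatible with the satellite $C_P$; in particular, $\beta$ is reducible, preserving the essential multicurve of $D_n$ obtained by intersecting the $T_i$ with a fixed page of the open book at $A$, after minimizing the number of intersection components as in the proof of Lemma \ref{lemma:compatible}. Compatibility then forces the pattern $P$ to be braided, as observed just after the definition of compatibility: inside each companion solid torus $V_i$, the link $l_i$ is transverse to the meridional disks of $V_i$ coming from the pages of the open book at $A$, since $L = \widehat{\beta}$ is transverse to all such pages and meets them positively.

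The argument is essentially a routine chaining together of Theorem \ref{theorem:OB} and Lemma \ref{lemma:compatible}, and I do not foresee any serious technical obstacle. The one point to check carefully is that after the isotopy produced by Theorem \ref{theorem:OB}, the pages of the open book at $A$ intersect each companion solid torus $V_i$ in a system of meridional disks rather than in something more complicated; this is precisely what the minimality-of-intersection argument in the proof of Lemma \ref{lemma:compatible} arranges, and identifying the meridional structure of $V_i$ thus obtained with the one used in the definition of ``braided pattern'' is the main (though minor) bookkeeping step.
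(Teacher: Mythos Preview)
Your proposal is correct and follows essentially the same route as the paper's proof: apply Theorem~\ref{theorem:OB} to the companion tori using the hypothesis $c(\beta)>1$, then invoke Lemma~\ref{lemma:compatible} to conclude compatibility, from which the braidedness of the pattern follows as noted just before that lemma. The extra bookkeeping you sketch (handling trivial patterns, simultaneous isotopy, identifying meridional disks) is not spelled out in the paper, which simply chains the two cited results together.
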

\begin{proof}
Let $A$ be the axis of the closed braid $L=\widehat{\beta}$. 
Since $c(\beta)>1$, by Theorem \ref{theorem:OB}, every essential torus $T$ of the complement of the closed braid $L=\widehat{\beta}$ can be put so that it is disjoint from the braid axis $A$. Hence by Lemma \ref{lemma:compatible} if $L$ is a satellite $C_P$ then $P$ is a braided pattern and $\beta$ is compatible with the satellite $C_P$.
\end{proof}

Unfortunately, Theorem \ref{theorem:compatible-FDTC} does not cover all the fully positive braid links. For a positive braid that contains at least one full twist, we only say $c(\beta)\geq 1$ by Proposition \ref{proposition:FDTC-basics} (i), (iv).
However, we have the following characterization of a positive braid that contains at least one full twists whose FDTC is one.

\begin{lemma}
\label{lemma:FDTC-1}
Let $\beta=\Delta^{2}\alpha \in B_{n}$ ($\alpha \in B^{+}_{n}$) be a positive $n$-braid that contains at least one full twist.
\begin{itemize}
\item[(i)] $c(\beta)>1$ if and only if $\alpha$ can be written as a positive braid word that contains all $\sigma_1,\ldots, \sigma_{n-1}$.
\item[(ii)] $c(\beta) =1$ if and only if $\beta$ is a regular form whose exterior braid is the full-twist
\[ \beta = \overline{\Delta_{m}^{2}}(\alpha_1,\ldots,\alpha_m)\]
\end{itemize}
\end{lemma}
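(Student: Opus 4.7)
The plan is to reduce both claims to statements about $c(\alpha)$ by invoking property (i) of Proposition \ref{proposition:FDTC-basics}: since $\beta = \Delta^{2}\alpha$, one has $c(\beta) = 1 + c(\alpha)$, so $c(\beta)>1 \iff c(\alpha)>0$ and $c(\beta)=1 \iff c(\alpha) = 0$. The core claim, which gives (i), is that for a positive braid $\alpha$, $c(\alpha)>0$ if and only if $\alpha$ uses every generator $\sigma_1,\ldots,\sigma_{n-1}$.

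For the ``only if'' direction of (i), suppose some $\sigma_i$ does not appear in any positive word for $\alpha$. (This is a well-defined property of the braid $\alpha$, because the braid and commutativity relations in $B_{n}^{+}$ preserve the set of generators used in a positive word.) Then $\alpha$ lies in the commuting product $B_i^{+}\cdot B_{n-i}^{+}$ inside $B_n^{+}$, so $\alpha = \alpha_L\alpha_R$ with $\alpha_L$ supported on the first $i$ strands and $\alpha_R$ on the last $n-i$; this exhibits $\alpha$ as the regular form $\overline{e}(\alpha_L,\alpha_R)$ whose exterior is the trivial $2$-braid $e$, and property (v) together with $c(e)=0$ (which follows from $c(\Delta^{0})=0$) gives $c(\alpha)=0$. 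For the ``if'' direction, fix a positive word for $\alpha$ in which every $\sigma_j$ appears. Concatenating $n-1$ copies and extracting one occurrence of $\sigma_j$ from the $j$-th copy exhibits $\sigma_1\sigma_2\cdots\sigma_{n-1}$ as a positive subword of $\alpha^{n-1}$; iterating property (iv) yields $c(\alpha^{n-1}) \geq c(\sigma_1\sigma_2\cdots\sigma_{n-1})$. Since $(\sigma_1\cdots\sigma_{n-1})^{n} = \Delta^{2}$, property (iii) forces $c(\sigma_1\cdots\sigma_{n-1})=1/n$, and applying (iii) to the left-hand side gives $c(\alpha) \geq \tfrac{1}{n(n-1)} > 0$.

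For (ii), the reverse implication is immediate from property (v): any regular form with exterior $\Delta_m^{2}$ satisfies $c(\beta) = c(\Delta_m^{2}) = 1$. For the forward implication, assume $c(\beta)=1$, equivalently $c(\alpha)=0$; by (i), the generators omitted by $\alpha$ form a nonempty set $\{\sigma_{i_1},\ldots,\sigma_{i_{m-1}}\}$ that partitions the $n$ strands into $m$ consecutive blocks of sizes $n_1,\ldots,n_m$, so $\alpha = \alpha_1\alpha_2\cdots\alpha_m$ with $\alpha_j\in B_{n_j}^{+}$ and the $\alpha_j$ pairwise commuting. The decomposition then follows from the cabling identity
\begin{equation*}
\Delta_n^{2} \;=\; \overline{\Delta_m^{2}}\cdot \Delta_{n_1}^{2}\Delta_{n_2}^{2}\cdots\Delta_{n_m}^{2},
\end{equation*}
expressing a $2\pi$ rotation of all $n$ points as a $2\pi$ rotation of the $m$ tubes followed by an internal $2\pi$ rotation inside each tube, combined with the observation that each $\Delta_{n_j}^{2}$ is central in $B_{n_j}$ and commutes with braids supported on other blocks. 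Regrouping then gives $\beta = \overline{\Delta_m^{2}}(\Delta_{n_1}^{2}\alpha_1,\ldots,\Delta_{n_m}^{2}\alpha_m)$, which is a regular form of the required shape.

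The main obstacle is justifying the cabling identity and the commutations used to regroup. Both are geometrically transparent as descriptions of rotations, but deserve an algebraic cross-check; the simplest is a crossing count, $n(n-1) = 2\sum_{j<k}n_jn_k + \sum_j n_j(n_j-1)$, together with the purity of $\overline{\Delta_m^{2}}$ on each tube (which ensures that conjugation by $\overline{\Delta_m^{2}}$ fixes the central element $\Delta_{n_j}^{2}$).
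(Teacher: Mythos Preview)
Your proof is correct and follows essentially the same strategy as the paper's: both reduce to analyzing whether $\alpha$ uses every generator, both obtain the lower bound on $c(\alpha)$ by comparing $\alpha^{n-1}$ to $\sigma_1\cdots\sigma_{n-1}$ via property (iv), and both invoke property (v) to pin down the case $c(\beta)=1$. Your treatment is in fact slightly more careful in places---you correctly get $c(\alpha)\geq \tfrac{1}{n(n-1)}$ rather than the paper's looser $\tfrac{1}{n}$, and you make explicit the cabling identity $\Delta_n^2 = \overline{\Delta_m^2}\,\Delta_{n_1}^2\cdots\Delta_{n_m}^2$ that the paper leaves implicit. The one organizational difference is that for the ``only if'' in (i) you exhibit $\alpha$ itself as a regular form with trivial $2$-strand exterior to conclude $c(\alpha)=0$, whereas the paper instead exhibits $\beta=\Delta^2\alpha$ as a regular form with full-twist exterior to get $c(\beta)=1$ directly; the two moves are equivalent and neither is simpler than the other.
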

\begin{proof}
Assume that $\alpha$ can be written as a positive braid word that contains all $\sigma_1,\ldots, \sigma_{n-1}$. Then one can get the braid $\alpha^{n-1}$ from $(\sigma_1\sigma_2\cdots\sigma_{n-1})$ by inserting positive braid $\sigma_i$ repeatedly. By \ref{proposition:FDTC-basics} (iv), this implies $c(\alpha)\geq c(\sigma_1\sigma_2\cdots\sigma_{n-1})$. By \ref{proposition:FDTC-basics} (iii), $c(\sigma_1\sigma_2\cdots\sigma_{n-1})^{n} = c(\Delta^{2})=1$ hence $c(\alpha) \geq \frac{1}{n}>0$ so $c(\beta)=c(\alpha) +1 >1$.

Now we assume that $\alpha$ can be written as a positive braid word that does not contain $\sigma_{i_1},\sigma_{i_2},\ldots,\sigma_{i_m}$ ($i_1<i_2<\cdots <i_m$) but contains all the other generators. Let $\mathcal{C}_{j}$ be the round curve that encloses $p_{i_{j-1}+1}, p_{i_{j-2}+2},\ldots, p_{i_j}$ (here we put $i_{0}=0$). Then $\beta=\Delta^{2}\alpha$ preserves all the round curves $\mathcal{C}_j$, and is a regular form whose exterior braid is the full twist.
\end{proof}

Using this characterization, we complete the proof of existence of compatible braid representatives for fully positive braid links.

\begin{theorem}[Existence of compatible braid representative for fully positive braid link]
\label{theorem:compatible-fully-positive}
Let $\beta \in B_{n}$ be a positive $n$-braid that contains at least one positive full-twist and let $L=\widehat{\beta}$ be its closure. If $L$ is a satellite $C_P$, then the pattern $P$ is braided. Furthermore, $\beta$ is a reducible braid that is compatible with the satellite $C_P$.
\end{theorem}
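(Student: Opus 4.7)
My plan is to split the analysis on the value of the Fractional Dehn twist coefficient $c(\beta)$. By Proposition \ref{proposition:FDTC-basics}(i),(iv) applied to $\beta=\Delta^{2}\alpha$ with $\alpha\in B_n^{+}$, we always have $c(\beta)\geq 1$. If $c(\beta)>1$, the conclusion is immediate from Theorem \ref{theorem:compatible-FDTC}, so the real work lies in the borderline case $c(\beta)=1$, where Theorem \ref{theorem:OB} does not directly apply.

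In this borderline case, Lemma \ref{lemma:FDTC-1}(ii) already puts $\beta$ in a regular form
\[ \beta = \overline{\Delta_m^{2}}(\beta_1^{\ast},\ldots,\beta_m^{\ast}) \]
whose exterior is the full twist on $m\geq 2$ tubes. So $\beta$ is already reducible, preserving the round multicurve $\mathcal{C}_1,\ldots,\mathcal{C}_m$ whose suspension gives tori $T_1,\ldots,T_m\subset S^{3}\setminus L$ disjoint from the braid axis $A$. A preliminary algebraic observation I would record is that each interior braid $\beta_i^{\ast}$ itself contains a full twist: the absence of the tube-separating generators in $\alpha$ forces $\alpha$ to be a regular form with trivial exterior, and the standard tube decomposition
\[ \Delta_n^{2} = \overline{\Delta_m^{2}}\cdot \Delta_{n_1}^{2}\cdots \Delta_{n_m}^{2} \]
then gives $\beta_i^{\ast} = \Delta_{n_i}^{2}\alpha_i$ for some $\alpha_i\in B_{n_i}^{+}$; in particular, the pattern inside each tube is fully positive and the corresponding $T_i$ is essential whenever $n_i\geq 2$.

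By Lemma \ref{lemma:compatible}, to conclude both that $\beta$ is compatible with $C_P$ and that $P$ is braided, it suffices to show that every essential torus $T'$ of $S^{3}\setminus L$ can be isotoped disjoint from $A$. The plan is a JSJ-type argument along the essential $T_i$'s, which splits $S^{3}\setminus L$ into interior pieces $T_i\setminus L_i$ (each inside a tube and thus disjoint from $A$) and an exterior piece $S^{3}\setminus(L\cup\bigcup T_i)$. For the exterior piece, the key observation is that any essential torus it contains can be pushed off $\partial V$ and so lies in $V\setminus\bigcup T_i$, where $V$ is the main braid solid torus: the only other region of the exterior piece is the outer solid torus $S^{3}\setminus V$, whose embedded tori are either compressible or parallel to $\partial V$, and $\partial V$ itself compresses in $S^{3}\setminus L$ via a meridian disk on the outer side. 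Since $V\setminus\bigcup T_i\subset V$ is disjoint from $A$, any essential torus of $S^{3}\setminus L$ can after isotopy be placed in a region disjoint from $A$, and Lemma \ref{lemma:compatible} completes the argument.

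The step I expect to be the main obstacle is the topological isotopy argument itself: one must show that the essential $T_i$'s form (part of) a canonical family of essential tori so that any other essential torus can be isotoped disjoint from them into a piece of the decomposition, handle the degenerate tubes with $n_i=1$ whose $T_i$ is inessential and has to be discarded before the argument is applied, and verify that no essential torus in the exterior piece genuinely requires crossing $\partial V$. These are standard but delicate applications of JSJ and incompressibility methods, informed by the fact that the $(m,m)$-torus link complement governing the exterior region is Seifert fibered.
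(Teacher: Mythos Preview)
Your proposal is correct and follows essentially the same approach as the paper: split on whether $c(\beta)>1$ or $c(\beta)=1$, invoke Theorem~\ref{theorem:compatible-FDTC} in the first case, and in the second case use Lemma~\ref{lemma:FDTC-1}(ii) together with a JSJ argument to push every essential torus off the axis. The paper compresses the entire $c(\beta)=1$ analysis into the single sentence ``From the JSJ decomposition of link complements, it follows that every essential torus $T$ in the complement of the closed braid $L=\widehat{\beta}$ is disjoint from the braid axis $A$,'' whereas you spell out the decomposition into interior and exterior pieces, the observation that the exterior piece is the Seifert fibered $(m,m)$-torus link complement, and the compressibility of $\partial V$ from the outside; your additional remark that each interior braid $\beta_i^\ast$ is again fully positive is correct and useful for the recursion, though the paper does not make it explicit here.
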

\begin{proof}
Let $L$ be a fully positive braid link.
Take a braid representative $\beta=\Delta^{2}\alpha \in B_{n}$ ($\alpha \in B^{+}_{n}$ of $L$ so that it at least one full twist.

If $c(\beta)>1$, the assertion follows from Theorem \ref{theorem:compatible-FDTC}, so we assume that $c(\beta)=1$. Thus by Lemma \ref{lemma:FDTC-1}, $L$ is a braided satellite of $(m,m)$-torus link. From the JSJ decomposition of link complements, it follows that every essential torus $T$ in the complement of the closed braid $L=\widehat{\beta}$ is disjoint from the braid axis $A$. Thus By Lemma \ref{lemma:compatible}, if we write $L$ as a satellite $L=C_P$ then the braid $\beta$ is compatible with the satellite $C_P$.
\end{proof}

This completes the proof of our main theorem.

\begin{proof}[Proof of Theorem \ref{theorem:main} (a) $\Rightarrow (b)$]
Assume that a satellite $C_P$ is a fully positive braid link represented by a positive braid $\beta$ that contains at least one full twist.
 By Theorem \ref{theorem:compatible-fully-positive}, $\beta$ is compatible with the satellite $C_P$. By Theorem \ref{theorem:compatible-satellite}, the companion $C$ is a fully positive braid link, and the $i$-th pattern $P_i$ is a positive braid that contains at least 
 {\correction $2g(C_i)+2b(C_i)-1$} full twists.
\end{proof}

The same argument shows the following variant of our main theorem. 

\begin{theorem}[Characterization of satellite positive braid link under the FDTC$>1$ condition]
\label{theorem:main-variant}
Let $L$ be the satellite $C_P$ with companion $C=C_1 \cup C_2\cup \cdots \cup C_{m}$ and pattern $P=(P_1,\ldots,P_m)$.
Then (a) and (b) are equivalent.
\begin{itemize}
\item[(a)] $L$ admits a positive braid representative $\beta$ such that $c(\beta)>1$
\item[(b)] $C$ is a positive braid link that admits a positive braid representative $\beta_{\rm ext}$ such that $c(\beta_{\rm ext})>1$ and the pattern $P$ is braided. The $i$-th pattern $P_i$ is a positive braid that contains at least $w(C_i,\beta_{\rm ext})$
 full twists. 
\end{itemize}
\end{theorem}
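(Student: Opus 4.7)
The plan is to mirror the proof of Theorem \ref{theorem:main} almost verbatim, replacing the hypothesis ``$\beta$ contains at least one full twist'' by the hypothesis ``$c(\beta)>1$''. Since Theorem \ref{theorem:compatible-FDTC} directly produces a compatible braid representative under FDTC$\,>1$, no separate geometric analysis analogous to Theorem \ref{theorem:compatible-fully-positive} is needed: the borderline case $c(\beta)=1$ (handled via Lemma \ref{lemma:FDTC-1} in the main theorem) simply does not arise. The quantitative content of the theorem will come from the $k=0$ case of Theorem \ref{theorem:compatible-satellite}.

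For the implication (a)$\Rightarrow$(b), I take a positive braid representative $\beta$ of $L$ with $c(\beta)>1$. Theorem \ref{theorem:compatible-FDTC} forces $P$ to be braided and $\beta$ to be compatible with the satellite $C_P$. Applying Theorem \ref{theorem:compatible-satellite} with $k=0$ produces a conjugate $\beta^{*}$ of $\beta$ in regular form
\[ \beta^{*} = \overline{\beta^{*}_{\rm ext}}(\Delta_{n_1}^{-2w(C_1,\beta^*_{\rm ext})}\beta_{1},\ldots, \Delta_{n_m}^{-2w(C_m,\beta^*_{\rm ext})}\beta_{m}) \]
that is still positive. Proposition \ref{proposition:regular-positive} then ensures $\beta^{*}_{\rm ext}$ is a positive braid and each $\Delta_{n_i}^{-2w(C_i,\beta^*_{\rm ext})}\beta_{i}$ is a positive braid, the latter meaning that $\beta_{i}$ contains at least $w(C_i,\beta^{*}_{\rm ext})$ full twists. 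Finally, combining Proposition \ref{proposition:FDTC-basics} (ii) and (v) gives $c(\beta^{*}_{\rm ext})=c(\beta^{*})=c(\beta)>1$, so $\beta^{*}_{\rm ext}$ is the required positive braid representative of $C$ with FDTC greater than $1$.

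For (b)$\Rightarrow$(a), I simply assemble the regular form: given the positive braid $\beta_{\rm ext}$ representing $C$ with $c(\beta_{\rm ext})>1$, and positive braid representatives $\beta_i$ of each $P_i$ containing at least $w(C_i,\beta_{\rm ext})$ full twists, set
\[ \beta := \overline{\beta_{\rm ext}}(\Delta_{n_1}^{-2w(C_1,\beta_{\rm ext})}\beta_{1},\ldots, \Delta_{n_m}^{-2w(C_m,\beta_{\rm ext})}\beta_{m}). \]
By construction $\widehat{\beta}=L$, and Proposition \ref{proposition:regular-positive} (with $k=0$) yields that $\beta$ is a positive braid; Proposition \ref{proposition:FDTC-basics} (v) then gives $c(\beta)=c(\beta_{\rm ext})>1$. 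There is no genuine obstacle here beyond bookkeeping: all the real work has been done in Theorems \ref{theorem:compatible-satellite} and \ref{theorem:compatible-FDTC}, and the proof amounts to reading off their conclusions in the $k=0$ case.
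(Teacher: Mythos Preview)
Your proof is correct and follows essentially the same route as the paper's own argument: use Theorem \ref{theorem:compatible-FDTC} to obtain compatibility, feed this into Theorem \ref{theorem:compatible-satellite} (in its $k=0$ form) to extract a positive regular form, and then read off $c(\beta_{\rm ext})>1$ via Proposition \ref{proposition:FDTC-basics}. You spell out a bit more than the paper does---for instance, you explicitly invoke conjugacy invariance (ii) in addition to (v), and you write out the (b)$\Rightarrow$(a) direction in full---but there is no substantive difference in method.
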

\begin{proof}
The proof is almost the same. (b) $\Rightarrow$ (a) is easy. To see (a) $\Rightarrow$ (b), let $\beta$ be a positive braid representative of the link $L$ with $c(\beta)>1$. By Theorem \ref{theorem:compatible-FDTC}, $\beta$ is compatible with the satellite $C_P$. By Theorem \ref{theorem:compatible-satellite}, the companion $C$ is a positive braid link represented by $\beta_{\rm ext}$, and the $i$-th pattern $P$ is a positive braid that contains at least $w(C_i,\beta_{\rm ext})$ full twists, where $\beta_{\rm ext}$ is an exterior braid of a positive regular form of $\beta$. 
By Proposition \ref{proposition:FDTC-basics} (v), $c(\beta_{\rm ext})=c(\beta)>1$. 
\end{proof}

There are many positive braids $\beta$ that contain no full twists but $c(\beta)>1$. For example, a $3$-braid $\beta=\sigma_1^{a}\sigma_2^{b}\sigma_1^{c}\sigma_2^{d}\sigma_{1}^{e}\sigma_{2}^{f}$ for $a,b,c,d,e,f\geq 2$) is positive, $c(\beta)>1$ but does not contain a full twist. 

\setcounter{section}{1}
\setcounter{theorem}{0}
\renewcommand{\thesection}{\Alph{section}}
\section*{Appendix: characterization of the unknot by positive braid properties}
\label{Section:appendix}

In Corollary \ref{corollary:char-unknot} we gave a characterization of the unknot by the property that certain braided satellite is a fully positive braid knot.

In this Appendix, we give a similar characterization that generalizes the following.

\begin{theorem}\cite{Kr}
\label{theorem:K}
The $(n,\pm 1)$ cable $K_n$ of a knot $K$ is a positive braid knot for some $n>1$ if and only if $K$ is the unknot.
\end{theorem}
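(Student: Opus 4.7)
The ``if'' direction will be handled immediately: the $(n, \pm 1)$-cable of the unknot is the $(n, \pm 1)$-torus knot, which is itself the unknot and hence a (trivially) positive braid knot.

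For the ``only if'' direction, assume $K_n$ is a positive braid knot for some $n \geq 2$. The plan is to derive a contradiction under the hypothesis that $K$ is nontrivial, by exploiting rigidity of the Alexander polynomial of a positive braid knot. I will first observe that positive braid knots are fibered (Stallings, \cite{St}), and since the $(n, \pm 1)$-torus knot pattern is a closed braid in the solid torus and hence fibers it, the classical satellite-fiberedness theorem forces $K$ to be fibered; in particular $\deg \Delta_K(t) = 2 g(K)$, so it suffices to show $\Delta_K(t) = 1$. The standard cable formula for Alexander polynomials gives
\[ \Delta_{K_n}(t) = \Delta_K(t^n) \cdot \Delta_{T(n, \pm 1)}(t) = \Delta_K(t^n), \]
since $T(n, \pm 1)$ is the unknot.

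The core of the argument will be the following rigidity lemma: for any positive braid knot $L$ with $g(L) \geq 1$, the coefficient of $t^{2g(L) - 1}$ in $\Delta_L(t)$ (normalized as monic of degree $2g(L)$) is nonzero. Granting this, suppose $K$ is nontrivial. Then $g(K) \geq 1$ and $g(K_n) = n g(K) \geq n \geq 2$. Because $\Delta_K(t^n)$ is supported only on powers of $t^n$, the coefficient of $t^{2 g(K_n) - 1} = t^{2 n g(K) - 1}$ vanishes, since $2 n g(K) - 1 \not\equiv 0 \pmod{n}$ for $n \geq 2$. This contradicts the rigidity lemma applied to $L = K_n$, forcing $K$ to be the unknot.

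The main obstacle will be proving the rigidity lemma. Writing $\Delta_L(t) = \det(t I - \phi_{\ast})$, where $\phi_{\ast}$ is the action of the monodromy on $H_1$ of the fiber surface, the coefficient in question equals $-\mathrm{tr}(\phi_{\ast})$, and the task becomes showing $\mathrm{tr}(\phi_{\ast}) \neq 0$ (empirically always $+1$) for a positive braid monodromy. Direct computation verifies this for torus knots via $\Delta_{T(p, q)}(t) = (t^{pq} - 1)(t - 1)/[(t^p - 1)(t^q - 1)]$, and for small non-torus positive braid knots via the reduced Burau representation. My plan for the general case is a combinatorial analysis of the Seifert form on the Bennequin surface of a positive braid $\beta = \sigma_{i_1} \cdots \sigma_{i_\ell}$, where $\phi$ is an explicit product of positive Dehn twists along the cores of the bands; an alternative route is induction on the length $\ell$ of the positive braid word, using stabilization and Markov moves to reduce to the torus knot situation. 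Making this step rigorous is the delicate work on which the whole argument depends.
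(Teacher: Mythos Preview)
Your overall strategy coincides with the paper's. The paper does not reproduce Krishna's original proof of Theorem~\ref{theorem:K} (which, as noted there, goes through Gordon--Luecke and taut foliations); instead it proves the generalization Theorem~\ref{theorem:var-K} via the satellite Alexander polynomial formula together with a constraint on the top two coefficients of $\Delta_L$ for positive braid knots $L$---exactly your plan.

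The piece you flag as unfinished, your ``rigidity lemma'', is precisely Proposition~\ref{proposition:Alexander} of the paper: for a positive braid knot $L$ one has $a_{g(L)}(L)=1$ and $a_{g(L)-1}(L)=-p(L)$, where $p(L)$ is the number of prime factors; hence the second-to-top coefficient vanishes if and only if $L$ is the unknot. The paper does not prove this from scratch but cites it from \cite{It-HOMFLY} (stated there for the Conway polynomial). Your first suggested route, via the Seifert form on the Bennequin surface, is essentially how that result is obtained, so that approach is sound. Your second suggested route---induction on word length and Markov moves to reduce to torus knots---is not viable: Markov moves do not preserve positivity of the braid word, and there is no reduction of an arbitrary positive braid knot to a torus knot.

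Two minor corrections. Your empirical guess $\mathrm{tr}(\phi_\ast)=+1$ holds only for \emph{prime} positive braid knots; in general $\mathrm{tr}(\phi_\ast)=p(L)$, so for instance the connected sum of two trefoils has trace $2$. And the detour through fiberedness of $K$ is unnecessary: the contradiction comes entirely from applying the rigidity lemma to $K_n$ together with $g(K_n)=n\,g(K)$; the equality $\deg\Delta_K = 2g(K)$ then falls out as a consequence rather than being a needed input.
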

Her proof uses deep machineries like Gordon-Luecke theorem \cite{GL} and based on a result on existence of taut foliations.

We generalize Theorem \ref{theorem:K} for more general braided satellites.
We view a pattern $P=(V,l)$ as a link in $S^{3}$ by taking its standard embedding $V \hookrightarrow S^{3}$, in other words, by taking the satellite $U_P$ with the companion the unknot $U$. 

\begin{theorem}\label{theorem:var-K}
Let $P$ be a braided pattern represented by a positive $n$-braid. Assume that $p(P)\neq 1$ (i.e. $P$ represents either the unknot, or, a non-prime knot).
Then the satellite $K_P$ is a positive braid knot if and only if $K$ is the unknot.
\end{theorem}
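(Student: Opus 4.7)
The easy direction $(\Leftarrow)$ is immediate: if $K$ is the unknot, then $U_P$ equals $P$ viewed as a knot in $S^3$, which is a positive braid knot by hypothesis.

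For the converse I argue by contradiction: suppose $K_P$ is a positive braid knot but $K$ is non-trivial. The plan is to use the satellite multiplicativity of the Alexander polynomial, $\Delta_{K_P}(t) = \Delta_K(t^w)\,\Delta_P(t)$, where $w$ is the winding number of $P$. Because $P$ is a non-trivial braided pattern represented by a positive $n$-braid, $w = n \geq 2$, so $\Delta_K(t^w)$ has vanishing coefficient at $t^{w g_K - 1}$, and hence $a_{g_{K_P}-1}(\Delta_{K_P}) = a_{g_P-1}(\Delta_P)$. Invoking Proposition \ref{proposition:Alexander} (established in this appendix), which asserts $p(L) = -a_{g_L - 1}(\Delta_L)$ for a positive braid knot $L$, yields the identity $p(K_P) = p(P)$.

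When $p(P) = 0$, this forces $p(K_P) = 0$, so $K_P$ is the unknot. Since the pattern $P$ is non-trivial, a non-trivial companion $K$ would produce an essential companion torus in the complement of $K_P$; but the unknot complement is atoroidal, so $K$ must itself be the unknot. The remaining case $p(P) \geq 2$ is the main obstacle: here $K_P$ is composite, and one must reconcile the geometric prime decomposition of $K_P$ as a satellite, $K_P = K_{P^{\mathrm{ess}}} \# P_{\mathrm{loc},1} \# \cdots$, with the algebraic factorization $\Delta_{K_P} = \prod_i \Delta_{L_i}$ into Alexander polynomials of prime positive braid knots provided by Proposition \ref{proposition:Alexander}. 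The coefficient identity forces the essential pattern part $P^{\mathrm{ess}}$ to be prime as a knot in $S^3$, and the hardest step is then to exclude a positive braid realization of the satellite prime factor $K_{P^{\mathrm{ess}}}$ when $K$ is non-trivial. I would handle this by combining a finer coefficient analysis of the "extra" factor $\Delta_K(t^w)$ with the persistence of the companion torus in the JSJ decomposition of $K_P$, ruling out compatibility with $K$ non-trivial.
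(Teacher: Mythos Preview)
Your overall strategy coincides with the paper's: both compute $\Delta_{K_P}(t)=\Delta_K(t^{n})\,\Delta_P(t)$ and apply Proposition~\ref{proposition:Alexander} to $P$ and to $K_P$ to extract the identity $p(K_P)=p(P)$. Your treatment of the case $p(P)=0$ is also fine.

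The genuine gap is in the case $p(P)\geq 2$. Having obtained $p(K_P)=p(P)\geq 2$, you set up a prime decomposition $K_P=K_{P^{\mathrm{ess}}}\#P_{\mathrm{loc},1}\#\cdots$ and then promise a ``finer coefficient analysis'' of $\Delta_K(t^{w})$ combined with JSJ considerations, none of which is actually carried out. What you are missing is a one-line observation that makes this entire detour unnecessary and that the paper uses to finish immediately: because $P$ is a \emph{closed braid} in the solid torus with $n\geq 2$ strands, every sub-arc of $P$ is transverse to the meridian-disk fibration and hence unknotted in any ball containing it, so the pattern has no local knots. It is then a classical fact (going back to Schubert's analysis of companionship and prime decomposition) that a satellite with non-trivial companion and a pattern of wrapping number $\geq 2$ without local knots is \emph{prime}. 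In your own notation this says $P^{\mathrm{ess}}=P$ and there are no summands $P_{\mathrm{loc},i}$ at all. Hence $p(K_P)=1$ whenever $K$ is non-trivial, which directly contradicts $p(K_P)=p(P)\neq 1$ and finishes the proof.

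So the ``hardest step'' you describe does not exist: once you know braided satellites over non-trivial companions are prime, both cases $p(P)=0$ and $p(P)\geq 2$ are handled uniformly by the single implication $K\ \text{non-trivial}\Rightarrow p(K_P)=1$. As written, your argument for $p(P)\geq 2$ is only a plan, not a proof.
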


Our proof uses the property of Alexander polynomial of positive braid knot. Let
\[ \Delta_K(t)=\pm 1 + \sum_{i=1}^{N} a_i(K)(t^{i}+t^{-i})\] 
be the Alexander polynomial of a knot $K$, normalized so that $\Delta_K(t^{-1})=\Delta_K(t)$ and that $\Delta_K(1)=1$ hold. 
Let $p(K)$ be the number of prime factors of $K$, defined by
\[ p(K) = \begin{cases} 
\max\{n \: | \: K=K_1\# K_2 \# \cdots \# K_n, K_i \mbox{ is not the unknot}\} & K \neq \mathsf{Unknot}\\
0 & K = \mathsf{Unknot}
\end{cases}
\]

The following proposition, though interesting, is just a restatement of the corresponding result for the Conway polynomial \cite{It-HOMFLY}. This says that the Alexander polynomial of a positive braid knot detects the number of prime factors.

\begin{proposition}
\label{proposition:Alexander}
Let $K$ be a positive braid knot of genus $g=g(K)$. Then $g$ is equal to the degree of the Alexander polynomial, $a_g(K)=1$ and $a_{g-1}(K)=-p(K)$.
\end{proposition}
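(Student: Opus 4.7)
The plan is to deduce Proposition \ref{proposition:Alexander} from the analogous statement for the Conway polynomial established in \cite{It-HOMFLY}, via the classical substitution $z = t^{1/2} - t^{-1/2}$ (equivalently, $z^{2} = t - 2 + t^{-1}$) that converts Conway to Alexander.

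First I would dispose of the two easier claims. Since positive braid knots are fibered by Stallings's theorem, with fiber the Seifert surface produced by Seifert's algorithm applied to the positive braid, $\Delta_{K}(t)$ is monic of symmetric degree $g$; hence the degree of $\Delta_{K}$ equals $g$ and $|a_{g}(K)| = 1$. The sign will fall out of the computation below, so the genuinely new content is the formula $a_{g-1}(K) = -p(K)$.

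Next I would invoke the result of \cite{It-HOMFLY}: for a positive braid knot of genus $g$, writing $\nabla_{K}(z) = \sum_{i=0}^{g} c_{i}\, z^{2i}$, one has $c_{g} = 1$ and $c_{g-1} = 2g - p(K)$. Substituting, one obtains
\[ \Delta_{K}(t) \;=\; \sum_{i=0}^{g} c_{i}\,(t-2+t^{-1})^{i}. \]
The coefficient of $t^{g}$ receives a contribution only from $i = g$, where $(t-2+t^{-1})^{g}$ contributes $1$, so $a_{g}(K) = c_{g} = 1$. The coefficient of $t^{g-1}$ receives exactly two contributions: from $i = g$ the expansion of $(t-2+t^{-1})^{g}$ contributes $-2g$ (choose the $-2$ from one factor and $t$ from each of the remaining $g-1$ factors), and from $i = g-1$ the top term of $(t-2+t^{-1})^{g-1}$ contributes $c_{g-1}$. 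All terms with $i \leq g-2$ contribute $0$ since their $t$-degree is at most $g-2$. Summing, $a_{g-1}(K) = -2g + (2g - p(K)) = -p(K)$, as claimed.

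The only substantive ingredient is the formula for $c_{g-1}$ imported from \cite{It-HOMFLY}; everything else is a routine expansion. I do not foresee any serious obstacle in this argument — the main care needed is in verifying that no term $(t-2+t^{-1})^{i}$ with $i \leq g-2$ sneaks into the $t^{g-1}$ coefficient, which is immediate from degree considerations.
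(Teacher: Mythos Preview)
Your proposal is correct and follows exactly the same route as the paper: cite \cite{It-HOMFLY} for $\nabla_K(z)=z^{2g}+(2g-p(K))z^{2g-2}+\cdots$ and then substitute $z=t^{1/2}-t^{-1/2}$ to read off $a_g(K)$ and $a_{g-1}(K)$. The paper simply asserts that the statement follows from the substitution, whereas you spell out the coefficient extraction, but the argument is the same.
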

\begin{proof}
By \cite[Corollary 1.2]{It-HOMFLY} for the Conway polynomial $\nabla_K(z)$ of a positive braid knot $K$, $\nabla_K(z)=z^{2g}+ (2g-p(K))z^{2g-2} + \cdots$. The assertion follows from the fact that $\Delta_K(t)=\nabla_K(t^{1/2}-t^{-1/2})$.
\end{proof}

\begin{proof}[Proof of Theorem \ref{theorem:var-K}]
`If' direction is obvious so we prove `only if'.

Let $\Delta_K(t)$ and $\Delta_P(t)$ be the Alexander polynomial of $K$ and $P$, respectively. By Proposition \ref{proposition:Alexander} 
\[ \Delta_P(t)=(t^{g(P)}-t^{-g(P)}) -p(P) (t^{g(P)-1}+t^{-g(P)+1}) +\cdots \] 
We put 
\[ \Delta_K(t) = a_N(K)(t^{N}+t^{-N}) + a_{N-1}(K)(t^{N-1}+t^{-N+1})+ \cdots \]
By the satellite formula of the Alexander polynomial $\Delta_{K_P}(t)= \Delta_K(t^n)\Delta_P(t)$, it follows that
\[ \Delta_{K_P}(t)=a_{N}(K)(t^{nN+g(P)}-t^{-nN-g(P)}) -p(P)(t^{nN+g(P)-1}-t^{-nN-g(P)+1}) + \cdots \]
If $K_P$ is a positive braid knot, by Proposition \ref{proposition:Alexander}  $nN+g(P)=g(K_P)= ng(K)+g(P)$ hence $N=g(K)$. Furthermore, $p(P)=p(K_{P})$.
Since $P(K_P)=1$ whenever $K$ is non-trivial and we are assuming $p(P)\neq 1$, this shows that $K$ must be the unknot. 
\end{proof}

\end{document}